\newtheorem{prop}{Proposition}[section]
\newtheorem{thm}[prop]{Theorem}
\newtheorem{cor}[prop]{Corollary}
\newtheorem*{thm2}{Theorem}
\theoremstyle{definition}
\newtheorem{rem}[prop]{Remark}
\newtheorem{defi}[prop]{Definition}
\newtheorem*{ack}{Acknowledgement}
\def\co{\colon\thinspace}
\newcommand{\C}{\mathbb C}
\newcommand{\rmd}{\mathrm d}
\newcommand{\D}{\mathbb D}
\newcommand{\rme}{\mathrm e}
\newcommand{\FF}{\mathcal F}
\newcommand{\Hp}{\mathbb H}
\newcommand{\II}{\mathcal I}
\newcommand{\rmi}{\mathrm i}
\newcommand{\JJ}{\mathcal J}
\newcommand{\MM}{\mathcal M}
\newcommand{\N}{\mathbb N}
\newcommand{\NN}{\mathcal N}
\newcommand{\OO}{\mathcal O}
\newcommand{\PP}{\mathcal P}
\newcommand{\R}{\mathbb R}
\newcommand{\RR}{\mathcal R}
\renewcommand{\SS}{\mathcal S}
\newcommand{\UU}{\mathcal U}
\newcommand{\WW}{\mathcal W}
\newcommand{\Z}{\mathbb Z}
\newcommand{\lra}{\longrightarrow}
\newcommand{\ra}{\rightarrow}
\DeclareMathOperator{\codim}{\mathrm{codim}}
\DeclareMathOperator{\dist}{\mathrm{dist}}
\DeclareMathOperator{\gl}{\mathrm{Gl}}
\DeclareMathOperator{\Int}{\mathrm{Int}}
\DeclareMathOperator{\jet}{\mathrm{Jet}}
\DeclareMathOperator{\Jet}{\JJ\!\mathrm{et}}
\DeclareMathOperator{\wst}{\rmd\mathbf{x}\wedge\rmd\mathbf{y}}
\begin{document}

\author{Kai Zehmisch}
\address{Mathematisches Institut, Westf\"alische Wilhelms-Universit\"at M\"unster}
\email{kai.zehmisch@uni-muenster.de}

\title[Holomorphic jets in symplectic manifolds]{Holomorphic jets in symplectic manifolds}

\date{14.04.2014}

\begin{abstract}
  We define pointwise partial differential relations
  for holomorphic discs.
  Given a relative homotopy class,
  a relation, and a generic almost complex structure
  we provide the moduli space of discs
  that have an injective point
  with the structure of a smooth manifold.
  Applications to the local behaviour are given
  and an inequality
  for the number of
  singularities is derived.
  Moreover, we show that
  for a coordinate class
  of a monotone Lagrangian split torus
  generically the number of
  non-immersed holomorphic discs is even.
\end{abstract}

\subjclass[2010]{53D35, 32Q65, 58A20, 53D45}
\thanks{The author is partially supported by DFG grant
  ZE 992/1-1.}

\maketitle


\section{Introduction\label{intro}}

Holomorphic curves are an important tool
of symplectic topology.
Many applications
to the geometry of contact
or symplectic manifolds
require pointwise intersection
conditions that include
finitely many derivatives.
We will give
a systematic introduction to
the theory of higher order
intersection problems.
As it turns out the main difficulties
appear in the study of intersection problems
that involve points on the boundary
of holomorphic discs.
To resolve these is the goal of this article.

We consider holomorphic curves $C$
in a symplectic manifold $(M,\omega)$
with respect to a compatible almost complex structure $J$.
The boundary $\partial C$
is contained in a maximally totally real
(e.g.\ Lagrangian) submanifold $L$.
We assume $C$ to be parametrized by a smooth map
$u\co (\Sigma,\Gamma)\ra(M,L)$
that is defined on a Riemann surface $\Sigma$
with boundary $\Gamma$.
The map $u$ is required to be {\bf holomorphic}
meaning that $u$ is a solution of
$J(u)\circ Tu=Tu\circ\rmi$,
where $\rmi$ denotes the complex structure
on the Riemann surface $\Sigma$,
see \cite{grom85}.

Locally holomorphic curves $u:\C\ra M$ exist
and the partial derivatives
\[\partial_xu(0),\ldots,\partial_x^ru(0)\]
in $x$-direction with respect to
conformal coordinates $z=x+\rmi y$
can take any given value
up to order $r=0,1,2\ldots$,
cf.\ \cite{sik94},
resp., Proposition \ref{repr}.
We will prove
the corresponding statement for
local holomorphic curves
$u\co (\Hp,\R)\ra (M,L)$
that are defined on the closed
upper half-plane
in Proposition \ref{holhalfrep} below.
Because $u$ is a homogeneous solution
of the non-linear Cauchy-Riemann equation
$u_x+J(u)u_y=0$
the partial derivatives in $x$-direction
determine the $r$-th Taylor polynomial
(which includes derivatives in $y$-direction as well)
uniquely at $0\in\C$.

We call an equivalence class of germs
of holomorphic maps
$u\co (\C,0)\ra (M,p)$,
resp.,
$u\co (\Hp,\R,0)\ra (M,L,p)$,
that have the same partial derivatives
in $x$-direction up to order $r$
a {\bf holomorphic $r$-jet}
on $(\Sigma,\Gamma)\times(M,L)$.
Identifying neighbourhoods of
$0\in\C$ with neighbourhoods
of $z\in\Sigma$,
resp.,
neighbourhoods of
$0\in\Hp$ with neighbourhoods
of $z\in\Gamma$ in $\Sigma$,
the point $0\equiv z$
is called the {\bf source}
of the holomorphic $r$-jet.
The space of all
holomorphic $r$-jets
is a locally trivial fibre bundle over
the so-called {\bf source space}
$(\Sigma,\Gamma)$,
see Section \ref{gentang}.
Given interior points
$z_1,\ldots,z_{m_0}\in\Sigma$,
boundary points $x_1,\ldots,x_{m_1}\in\Gamma$,
and non-negative integers
$r_1,\ldots,r_{m_0},s_1,\ldots,s_{m_1}$
we consider the product
of the fibres
over $z_k$
in the space of holomorphic $r_k$-jets
for $k=1,\ldots,m_0$
and the fibres
over $x_{\ell}$
in the space of holomorphic $s_{\ell}$-jets
for $\ell=1,\ldots,m_1$.
This product space
is denoted by $\Pi$.

Let $\JJ$ be the space
of almost complex structures
compatible with $\omega$
subject to restrictions that are
described in Section \ref{acstandfloer}.
The restrictions depend on the 
higher order intersection
problem under consideration.
Notice that the
construction of $\Pi$
involves the almost complex structure
$J\in\JJ$.
We call a submanifold
$R$ of $\Pi$
a {\bf holomorphic jet relation}
provided $R$ is independent
of the choice of $J$ in $\JJ$
in the sense of Definition \ref{def:hointrel}
and Section \ref{univreldefinition}.
Examples are defined by higher order tangency
and intersection relations
such as holomorphic curves that
\begin{itemize}
\item
  intersect a holomorphic submanifold,
  see \cite{ciemoh07},
\item
  intersect a Lagrangian submanifold
  (or cycles therein),
\item
  have double points or singularities,
  see \cite{bar99,bar00,ohzh09,wen10,ohzh11,oh11}.
\end{itemize}

An {\bf injective point} of a holomorphic map $u$
is an immersed point $z\in\Sigma$
such that the preimage of $u(z)$
under $u$ is $\{z\}$.
A holomorphic curve is called
{\bf somewhere injective}
if there exists an injective point
on each connected component.
In Section \ref{theunivjetsp}
(see Theorem \ref{mainthmfirstfrom}
and \ref{transforallsomewinjcurves})
we will prove that
the moduli space of
somewhere injective holomorphic curves
which represent a given homology class and jets in $R$
(see Section \ref{univmodspaceisbanmanifold}
and \ref{subsec:genpert} for the definition)
is a manifold
provided the almost complex structure
is chosen generically.

\begin{thm2}
  Let $R$ be a holomorphic jet relation
  that is closed as a subset of $\Pi$.
  Then there exists a subset
  $\JJ_{\mathrm{gen}}\subset\JJ$
  of second Baire category
  such that for all
  $J\in\JJ_{\mathrm{gen}}$
  the following holds:
  The moduli space
  of somewhere injective
  $J$-holomorphic maps
  $(\Sigma,\Gamma)\ra (M,L)$
  that represent a class
  $A\in H_2(M,L)$
  such that the $J$-holomorphic
  jets of orders
  $\mathbf{t}:=(r_1,\ldots,r_{m_0},s_1,\ldots,s_{m_1})$
  with sources
  $z_1,\ldots,z_{m_0},x_1,\ldots,x_{m_1}$
  (considered as a vector)
  take values in $R$
  is a smooth manifold.
  The dimension equals
  \[
  \mu(A)+
  n\big(\chi(\Sigma)-
  \|\mathbf{t}\|\big)+
  (1-n)\|\mathbf{m}\|+
  \dim R,
  \]
  where $\mu(A)$ is the Maslov number of $A$,
  $n$ half the dimension of $M$,
  $\chi(\Sigma)$ the Euler characteristic of $\Sigma$,
  $\|\mathbf{t}\|:=2(r_1+\ldots+r_{m_0})+s_1+\ldots+s_{m_1}$,
  and $\|\mathbf{m}\|:=2m_0+m_1$.
\end{thm2}

As an application we study the local behaviour
of holomorphic discs for generic almost complex structures.
Recall, that a holomorphic disc is called {\bf simple}
provided the set of injective points is dense.
A holomorphic disc is
{\bf simple along the boundary}
provided the set of injective points
on the boundary $\partial\D$
is dense in $\partial\D$,
see \cite{zehm13}.
For simple holomorphic discs
\cite[Corollary 8.5]{zehm13}
gives an alternative characterization.
Namely, a simple holomorphic disc $u$
is simple along the boundary
if and only if
the set of injective points of $u|_{\partial\D}$
is dense in $\partial\D$.

In view of the work of Lazzarini \cite{lazz00,lazz11}
and McDuff-Salamon \cite{mcsa04}
the most important difference
to the local behaviour of holomorphic spheres is
that the local covering number
need not to be constant along a holomorphic disc.
In particular, a somewhere injective
holomorphic disc is not simple in general.
But in the case
the almost complex structure is chosen generically
we will prove in Section \ref{aprioripertandlocb}
that a somewhere injective holomorphic disc
has a dense set of injective points in the interior
and on the boundary,
i.e., is simple and simple along the boundary.
In fact, the set of non-injective points is finite
if $n\geq 3$.
Further, in Section \ref{multicoverddiscs}
we extend Lazzarini's theorem
\cite[Theorem B]{lazz11},
which says that generically any non-constant
holomorphic disc is simple or multiply covered,
to the remaining dimension $4$.

In Section \ref{secadjineq} and \ref{seconsing}
we estimate the number of double points
and singularities (counted with multiplicity)
in terms of topological data.
In Section \ref{secexample} we give an example
how to define Gromov-Witten type invariants
that count discs with singular points.
In Section \ref{subsecemb} we discuss the generic existence
of immersed and embedded holomorphic curves.


\section{Generalized holomorphic tangencies\label{gentang}}


\subsection{Definition\label{jetdef}}

Let $\Sigma$ be a Riemann surface
and $(M,J)$ a $2n$-dimensional almost complex manifold.
We consider holomorphic maps $u\co\Sigma\ra (M,J)$.
A {\bf local representation}
with {\bf source} $z\in\Sigma$
and {\bf target} $u(z)\in M$
consists of a conformal chart $(U,k)$
with $k(z)=0$
and a chart $(V,h)$
with $h\big(u(z)\big)=0$.
We can assume that $(h_*J)_{u(z)}=\rmi$,
where $\rmi$ denotes
the standard complex multiplication of $\C^n$.
Usually we will suppress
the localization in the notation.
Therefore,
the {\bf Cauchy-Riemann equation} gets
\[u_x+J(u)u_y=0\]
with respect to conformal coordinates $z=x+\rmi y$.

Let $r=0,1,2,\ldots$ be a non-negative integer.
We call two germs of holomorphic maps $u,v$
{\bf $r$-equivalent} at $z\in\Sigma$
if $u(z)=v(z)$
and if in a local representation
the partial derivatives in $x$-direction at $0$
coincide up to order $r$,
i.e., if
\[\partial_x^{\ell}u(0)=\partial_x^{\ell}v(0)\]
for all $\ell=1,\ldots,r$.
With the knowledge of
$\partial_xu(0),\ldots,\partial_x^ru(0)$
one can reconstruct the $r$-th Taylor polynomial at $0$
by taking partial derivatives of the Cauchy-Riemann equation
\[u_y=J(u)u_x\]
as follows:
\begin{eqnarray*}
  u_{xy}&=&J(u)u_{xx}+\big(DJ(u)\cdot u_x\big)u_x\\
  u_{yy}&=&J(u)u_{xy}+\big(DJ(u)\cdot u_y\big)u_x\\
  &\vdots&
\end{eqnarray*}
Therefore,
\emph{all} partial derivatives of $u$ and $v$
up to order $r$ coincide at $0$.
By the chain rule
this implies independence of the chosen local representation
which we used in the definition.
Therefore, the equivalence relation is well defined.
The $r$-equivalence class $j_z^ru$
is called the {\bf holomorphic $r$-jet}.
We denote the space of all $r$-jets by
\[(\Sigma\times M)_J^r\equiv\jet^r.\]


\subsection{A representation}

For a small open neighbourhood $V$ of $0\in\R^{2n}$
the space of holomorphic jets
from $\C$ to $(V,J)$ can be identified with
\[(\C\times V)_J^r\equiv\C\times V\times (\R^{2n})^r\]
as we will prove in the following proposition.

\begin{prop}
  \label{repr}
  Let $J$ be an almost complex structure
  on $\R^{2n}$ with $J(0)=\rmi$.
  There exists an open neighbourhood
  $V$ of $0\in\R^{2n}$
  such that for all
  $(z_0,a_0)\in\C\times V$ and $a_1,\ldots,a_r\in\R^{2n}$
  there exists a germ of holomorphic maps
  $u\co\C\ra(\R^{2n},J)$
  with source $z_0$ and target $u(z_0)=a_0$
  satisfying
  \[\partial_xu(z_0)=a_1,\ldots,\partial_x^ru(z_0)=a_r.\]
\end{prop}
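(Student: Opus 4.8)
The plan is to reduce the statement to a local existence problem for the nonlinear Cauchy--Riemann operator $Q(u):=u_x+J(u)u_y$ with prescribed $r$-jet, and then to solve the resulting perturbed $\DB$-equation by a fixed-point argument on a small disc. First I would use the differentiation of the Cauchy--Riemann equation carried out above to convert the data $a_1,\dots,a_r$ into a full $r$-jet: since $\partial_x^{\ell}u(z_0)$ for $\ell\le r$ determine every partial derivative of a solution up to order $r$, there is a polynomial map $P\co\C\ra\R^{2n}$ of degree $r$ with $P(z_0)=p$, $\partial_x^{\ell}P(z_0)=a_{\ell}$, and all mixed partials at $z_0$ equal to the reconstructed values. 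By construction the Taylor expansion of $Q(P)$ at $z_0$ vanishes through order $r-1$, because each condition $\partial^{\beta}Q(P)(z_0)=0$ for $|\beta|\le r-1$ involves only partials of $P$ up to order $r$, which are exactly the ones fixed by the reconstruction. Hence $Q(P)(z)=O\big(|z-z_0|^{r}\big)$.

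Next I would seek the solution in the form $u=P+v$. Writing
\[Q(P+v)=Q(P)+Lv+N(v),\]
with $L$ the linearization of $Q$ at $P$, namely $Lv=v_x+J(P)v_y+\big(DJ(P)\cdot v\big)P_y$, and $N$ collecting the terms of order $\ge 2$ in $v$, the equation $Q(u)=0$ becomes $Lv=-Q(P)-N(v)$. Because $J(0)=\rmi$ and $p$ lies in a small neighbourhood of $0$, the operator $L$ is a small perturbation of the standard operator $v\mapsto v_x+\rmi v_y=2\partial_{\oz}v$, which admits a bounded right inverse $T$ on the unit disc through the Cauchy--Pompeiu transform.

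The core of the argument is then a rescaling and contraction. Substituting $z=z_0+\epsilon\zeta$ and restricting to $|\zeta|\le 1$, the perturbation $L-2\partial_{\oz}$, the nonlinearity $N$, and --- crucially --- the source $Q(P)=O(\epsilon^{r})$ all become small as $\epsilon\to 0$. Using $T$ I would recast $Lv=-Q(P)-N(v)$ as a fixed-point equation $v=\Phi(v)$ on a small ball in a Hölder (or Sobolev) space, shrinking $V$ and $\epsilon$ so that $\Phi$ is a contraction, with the elliptic estimates for $\partial_{\oz}$ supplying the required bounds. By correcting $Tg$ with an added holomorphic term I would arrange that the pure $x$-derivatives $\partial_x^{\ell}v(z_0)$ vanish for $\ell\le r$, so that $u=P+v$ inherits the prescribed $r$-jet of $P$ and thus the values $\partial_x^{\ell}u(z_0)=a_{\ell}$.

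The main obstacle I expect is reconciling two competing demands: the contraction estimate wants $v$ as small and as unconstrained as possible, whereas the jet condition forces $v$ to have prescribed high-order vanishing at $z_0$. The rescaling is the device that resolves this, since it simultaneously renders the source $O(\epsilon^{r})$ and the nonlinear error small; but one must check that the right inverse can be chosen to preserve the high-order vanishing, so that the jet data survives, without spoiling the uniform operator bounds. Verifying this compatibility, together with the interior elliptic regularity that promotes the fixed point to a genuine germ of a holomorphic map, is the technical heart of the proof.
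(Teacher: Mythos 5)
Your argument is correct, but it takes a genuinely different route from the paper's proof. The paper never builds an approximate solution: it works on the closed unit disc with the totally real boundary condition $\xi(\rme^{2\pi\rmi\theta})\in\rme^{(2r+1)\pi\rmi\theta}\R^n$, chosen precisely so that the linear operator $T(\xi)=\big(\xi_x+\rmi\xi_y;\,\xi(0),\partial_x\xi(0),\ldots,\partial_x^r\xi(0)\big)$ is \emph{invertible} (this is the Riemann--Hilbert theory of \cite[Chapter C.4]{mcsa04}), applies the inverse function theorem to the full nonlinear map $F(v)=\big(v_x+J(v)v_y;\,v(0),\ldots,\partial_x^rv(0)\big)$ at $v=0$, solves $F(v)=(0;\,p,\varepsilon a_1,\ldots,\varepsilon^ra_r)$ for small $\varepsilon$, and recovers the large jet data by the conformal reparametrization $u=v(\cdot/\varepsilon)$ --- so the rescaling is applied to the \emph{data}. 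You instead linearize at the jet polynomial $P$, impose the jet through $P$ rather than through the target of the operator, and need only a \emph{right} inverse of the flat $\DB$-operator (Cauchy--Pompeiu), corrected so as to preserve jet vanishing; your rescaling is applied to the \emph{domain}, making the error $Q(P)$, the perturbation of the coefficients, and the zeroth-order term contraction-small. The compatibility you flag as the technical heart does go through: $\widetilde{T}g=Tg-\sum_{\ell\leq r}\tfrac{1}{\ell!}\,\partial_x^{\ell}(Tg)(0)\,\zeta^{\ell}$ is again a bounded right inverse (the subtracted term is $\rmi$-holomorphic, and jet evaluation is bounded since $W^{r+1,p}\hookrightarrow C^r$ for $p>2$), and its image lies in the subspace of maps with vanishing $r$-jet, so the fixed point inherits the jet of $P$. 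What each approach buys: yours is more elementary and self-contained, avoiding Maslov-index boundary conditions altogether, and is essentially Sikorav's original argument \cite{sik94}; the paper's operator-plus-jet-evaluation formulation transfers verbatim to the boundary case (the half-disc representation proving Proposition \ref{holhalfrep}, where a genuine Riemann--Hilbert problem cannot be avoided), and the invertibility of $T$ is reused in the proof of Proposition \ref{univdiffstr}, where the implicit function theorem applied to $\FF(\mathbf{b},v,J)$ yields \emph{smooth dependence} of the holomorphic germ on the jet data and on $J$ --- exactly what equips $\Jet^{\mathbf{t}}$ with its differentiable structure. A contraction fixed point also depends smoothly on parameters, but the paper's setup packages this for free.
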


\begin{proof}
  Translations in $\C$ are conformal
  so that it is enough to prove the proposition for $z_0=0$.
  Let $p>2$ such that the elliptic regularity theory applies
  to the following argument,
  cf.\ \cite{mcsa04}.
  We consider the operator
  \[
  T(\xi)=
  \big(
  \xi_x+\rmi\xi_y;\,
  \xi(0),\partial_x\xi(0),\ldots,\partial_x^r\xi(0)
  \big).
  \]
  The domain consists of all $\xi\in W^{r+1,p}(D,\R^{2n})$,
  where $D$ is the unit disc,
  such that
  $\xi(\rme^{2\pi \rmi\theta})\in\rme^{(2r+1)\pi \rmi\theta}\R^n$
  for all $\theta\in[0,1)$,
  with $\R^n$ identified with $(\R\times\{0\})^n$.
  The target space is $W^{r,p}(D,\R^{2n})\times (\R^{2n})^{r+1}$.
  By \cite[Chapter C.4]{mcsa04} the operator $T$ is invertible.
  
  As on \cite[p. 627]{mcsa04} (written with Lazzarini)
  we consider the map
  \[
  F(v)=
  \big(
  v_x+J(v)v_y;\,
  v(0),\partial_xv(0),\ldots,\partial_x^rv(0)
  \big).
  \]
  The linearization at zero
  \[DF(0)\cdot\xi=T(\xi)\]
  is invertible.
  By the inverse function theorem $F$
  is a local diffeomorphism
  mapping zero in $W^{r+1,p}(D,\R^{2n})$
  to zero in
  $W^{r,p}(D,\R^{2n})\times (\R^{2n})^{r+1}$.
  Therefore, the equation
  $F(v)=(0;\,a_0,\varepsilon a_1,\ldots,\varepsilon^ra_r)$
  has a unique solution $v$
  for $a_0\in\R^{2n}$ close to the origin and
  $\varepsilon>0$ sufficiently small.
  The desired holomorphic germ is $u=v\circ 1/\varepsilon$.
\end{proof}

\begin{rem}
  \label{rem:repr}
  The solution $u=u(\mathbf{a})$ depends smoothly on
  $\mathbf{a}=(a_0,a_1,\ldots,a_r)$ for all $\mathbf{a}$
  contained in a sufficiently small neighbourhood
  of the origin in $\R^{r+1}$
  as the local diffeomorphism $F$
  in the proof of Proposition \ref{repr} shows.
\end{rem}


\subsection{Differentiable structure}

Consider local representations $(U,k)$ and $(V,h)$.
By Proposition \ref{repr} and Remark \ref{rem:repr}
the maps
\[
\begin{array}{ccc}
  (U\times V)_J^r&\lra&(kU\times hV)_{h_*J}^r
  \\
  j_z^ru&\longmapsto&j_{k(z)}^r(h\circ u\circ k^{-1})
\end{array}
\]
define charts.
Therefore,
$(\Sigma\times M)_J^r$ is a manifold
of dimension $2\big(1+n(r+1)\big)$.
The projection onto $\Sigma\times M$
is an affine fibration,
cf.\ \cite{geig03}.


\subsection{The case with boundary}

Consider a Riemann surface $\Sigma$
with boundary $\Gamma$.
The conformal atlas of $\Sigma$
is enriched by boundary preserving conformal maps
into the closed upper half-plane $\Hp$.
We consider holomorphic maps
which take values
in a maximally totally real
submanifold $L$ of $(M,J)$
along $\Gamma$.
By \cite[p. 539]{mcsa04}
there exists charts $h$ of $M$
about points on $L$
which take values in $\R^n$ along $L$
and satisfy $h_*J=\rmi$ along $\R^n$.
By a local representation
such a choice of charts is understood.

Two germs $u,v\co (\Sigma,\Gamma)\ra(M,L)$
of holomorphic maps at $x\in\Gamma$
define the same {\bf holomorphic $s$-jet}
($s=0,1,2,\ldots$)
provided that $u_{|\Gamma},v_{|\Gamma}$
are $s$-equivalent in the sense of
jets of smooth maps.
In other words,
the $s$-tangency class $j_x^ru$
is characterized as in Section \ref{jetdef}
with respect to local representations
that preserve the boundary condition.
The space of all holomorphic $s$-jets
on the boundary is denoted by
\[(\Gamma\times L)_J^s\equiv\jet^s.\]

\begin{prop}
  \label{holhalfrep}
  Any $s$-jet of a smooth map $\Gamma\ra L$
  has a holomorphic representative.
  Moreover,
  $(\Gamma\times L)_J^s$ is a manifold of dimension
  $1+n(s+1)$,
  and has the structure of an affine fibration
  over $\Gamma\times L$ that is
  induced by the source-target map.
\end{prop}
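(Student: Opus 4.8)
The plan is to prove the three assertions in turn, following the interior case of Proposition~\ref{repr} and the subsection on the differentiable structure. Throughout I work in a local representation preserving the boundary condition, as provided by \cite[p.\ 539]{mcsa04}, so that $L$ becomes $\R^n\subset\R^{2n}$ with $h_*J=\rmi$ along $\R^n$, and I model the source $\Gamma$ by the straight part $I=D\cap\R$ of the boundary of the half-disc $D^+=\{z\in D:\im z\geq0\}$. Since boundary-preserving translations of $H^+$ are conformal, it suffices to realize a prescribed tangential $s$-jet at $0\in I$.

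For the existence statement I would set up the boundary analogue of the operator $T$ from the proof of Proposition~\ref{repr}. The tangential $s$-jet along $\Gamma$ of a map into $L$ is recorded by $\big(\xi(0),\partial_x\xi(0),\ldots,\partial_x^s\xi(0)\big)$, and the boundary condition $\xi(I)\subset\R^n$ forces these $s+1$ vectors to lie in $\R^n$. Accordingly, for $p>2$, consider
\[
T(\xi)=\big(\xi_x+\rmi\xi_y;\,\xi(0),\partial_x\xi(0),\ldots,\partial_x^s\xi(0)\big)
\]
on those $\xi\in W^{s+1,p}(D^+,\R^{2n})$ with $\xi(I)\subset\R^n$ and a suitably twisted totally real condition imposed on the circular arc of $\partial D^+$, chosen so that the underlying boundary value problem has index $n(s+1)$. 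By the totally real boundary theory of \cite[Chapter C.4]{mcsa04}, $T$ is then an isomorphism onto $W^{s,p}(D^+,\R^{2n})\times(\R^n)^{s+1}$. The linearization at $0$ of the nonlinear map
\[
F(v)=\big(v_x+J(v)v_y;\,v(0),\partial_xv(0),\ldots,\partial_x^sv(0)\big)
\]
equals $T$, so by the inverse function theorem $F$ is a local diffeomorphism; solving $F(v)=(0;\,a_0,\varepsilon a_1,\ldots,\varepsilon^s a_s)$ with $a_0,\ldots,a_s\in\R^n$ for $\varepsilon>0$ small and rescaling $u=v\circ(1/\varepsilon)$ yields a holomorphic germ with $u(\Gamma)\subset L$ and the prescribed tangential $s$-jet.

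The manifold structure then follows verbatim from the interior argument: boundary-preserving local representations $(U,k)$ and $(V,h)$ induce charts $j_x^su\mapsto j_{k(x)}^s(h\circ u\circ k^{-1})$, whose transition maps are smooth by the chain rule. Counting coordinates --- the source point in $\Gamma$ contributing $1$, the target in $L$ contributing $n$, and each of the $s$ tangential derivatives contributing $n$ --- gives
\[
\dim(\Gamma\times L)_J^s=1+n+ns=1+n(s+1).
\]
In these charts the source-target projection $j_x^su\mapsto(x,u(x))$ forgets the jet coordinates $(\R^n)^s$, and by the chain rule the transition maps act on the fibre through the Jacobian of $h$ together with lower-order shifts depending only on lower jets, hence affinely; so the projection is an affine fibration over $\Gamma\times L$, cf.\ \cite{geig03}.

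The main obstacle is the index bookkeeping for the mixed boundary value problem on $D^+$, including the corner contributions where $I$ meets the arc: one must choose the twisting of the totally real condition so that, together with the $n(s+1)$ tangential evaluation constraints, the operator has index zero and is invertible. Conceptually this is the Schwarz double of the interior problem --- reflecting across $\Gamma$ identifies $T$ with the restriction of the operator of Proposition~\ref{repr} (with $r=s$) to the reality-symmetric subspace, which halves the interior index $2n(r+1)$ to the required $n(s+1)$ --- and the invertibility is exactly the boundary case of \cite[Chapter C.4]{mcsa04} already used on \cite[p.\ 627]{mcsa04}.
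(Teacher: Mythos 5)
Your overall architecture --- linearize, invert the jet-evaluation operator, apply the inverse function theorem, rescale --- is the same as the paper's, and your chart/dimension count for the manifold structure is fine. The genuine gap is the invertibility of the linearized operator $T$ on the half-disc, which you yourself flag as ``the main obstacle'' and then outsource to \cite[Chapter C.4]{mcsa04} plus a Schwarz-doubling heuristic. Neither fills it. McDuff--Salamon's Chapter C.4 treats Riemann--Hilbert problems on the closed disc; it says nothing about a mixed boundary-value problem on $D^+$ with corners and with jet evaluations at a \emph{boundary} point. The doubling identification, in turn, fails at the Sobolev level: a kernel element of $T$ is holomorphic with values in $\R^n$ on $I$, so it does reflect, and doubling can indeed show $\ker T=0$; but a general inhomogeneous term $f\in W^{s,p}(D^+,\C^n)$ does not reflect --- for $s\geq1$ the symmetric extension $\tilde f(z)=\overline{f(\bar z)}$ of, say, a constant $f\equiv c\notin\R^n$ is discontinuous across $I$ and hence not in $W^{s,p}(D,\C^n)$. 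Consequently the half-disc problem is \emph{not} the reality-symmetric part of the doubled disc problem, and doubling yields no surjectivity statement. Since the target of your nonlinear map $F$ is the full space $W^{s,p}(D^+,\C^n)$ (there is no symmetry constraint on $v_x+J(v)v_y$), surjectivity onto that full space is precisely what must be proved, and your proposal does not prove it; nor does ``index $n(s+1)$'' help, since index bookkeeping alone never gives invertibility.

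The paper fills this gap in Steps 1--5 of its half-disc representation argument, by a different reduction: it smooths the corners to get a domain $\Omega\subset H^+$ and uses a conformal map $\varphi\co(\Omega,0)\ra(D,1)$, so the evaluation point stays on the boundary of the disc (it does not become the centre, as in your doubling). On $D$ the model problem with twisted boundary condition $\rme^{s\pi\rmi\theta}\R^n$ is handled by \cite[Chapter C.4]{mcsa04}, with explicit kernel polynomials $p_{\ell,b}$ and the auxiliary operators $S_k$; surjectivity of the evaluation operator $\hat Q$ for the transferred problem is then proved by a recursion over the jet order (Step 3); the actual boundary condition ($\R^n$ on $I$, an arbitrary totally real loop of Maslov index $s$ on the arc) is reached from the twisted model via Arnol'd's theorem and a compact perturbation, making $Q$ Fredholm of index zero (Step 5); and kernel triviality is obtained from the zero-counting formula of \cite[Lemma 9.5]{geizeh10} --- the sum of boundary zero orders plus twice the interior zero orders of a coordinate function equals the Maslov index $s$, contradicting vanishing to order $s+1$ at $0$ (Steps 4 and 5). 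The recursion, the Arnol'd trivialization, the compact perturbation, and the zero-counting argument are the actual mathematical content behind Proposition \ref{holhalfrep}; none of them appear in your proposal, so what you have is a correct outline of the strategy of Proposition \ref{repr} together with an unproved claim where the boundary case genuinely differs from the interior case.
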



\subsection{Holomorphic half-disc representation}

In this section
we will give a proof
of Proposition \ref{holhalfrep}.
Because the affine fibration structure
is obtained as in \cite{geig03}
it suffices to show
that $s$-jets of smooth maps $\Gamma\ra L$
can be represented by the restriction
of germs of $J$-holomorphic maps
$(\Sigma,\Gamma)\ra (M,L)$
such that the representation depends
smoothly on the data.
More precisely,
we consider an almost complex structure
$J$ on $\R^{2n}$
such that $J=\rmi$ on $\R^n$.
We claim that there exists
an open neighbourhood $V'$ of $0\in\R^n$
such that for all $(x_0,a_0)\in\R\times V'$
and $a_1,\ldots,a_s\in\R^n$
there exists a germ of $J$-holomorphic maps
$u\co(\Hp,\R)\ra(\R^{2n},\R^n)$ at $x_0$
satisfying
\[\partial_xu(x_0)=a_1,\ldots,\partial_x^su(x_0)=a_s,\]
such that $u$ depends
smoothly on $x_0,a_0,a_1,\ldots,a_s$.
It is enough to show this for $x_0=0$
because translations are holomorphic.

{\bf Step $\mathbf{1}$:}
On the unit disc $D\subset\C$
we consider the operator $u\mapsto u_x+\rmi u_y$
defined on the space $V$
consisting of all $\C^n$-valued functions
of Sobolev-class $W^{s+1,p}$, $p>2$,
subject to the boundary condition
$u(\rme^{2\pi \rmi\theta})\in\rme^{s\pi \rmi\theta}\R^n$,
$\theta\in [0,1)$.
The operator takes values in $W^{s,p}$.
By \cite[Chapter C.4]{mcsa04} this operator is onto
and its $n(s+1)$-dimensional kernel is generated by
\[
p_{\ell,b}(z)=
\frac{\rmi^{\ell}}{\ell !2^{s-\ell}}b(1+z)^{s-\ell}(1-z)^{\ell},
\]
$b\in\R^n, \ell=0,1,\ldots,s$.
Notice that
\[j_1^{\ell-1}p_{\ell,b}=0
\qquad\text{and}\qquad
\partial_y^{\ell}p_{\ell,b}(1)=b.\]
Set
\[
V_k=
\big\{
u\in V\;\big|\; D^{\alpha}u(1)=0\;\;\;\forall |\alpha|\leq k-1
\big\}
\]
and
\[
W_k=
\big\{
f\in W^{s,p}(D,\C^n)\;\big|\; D^{\beta}f(1)=0\;\;\;\forall |\beta|\leq k-2
\big\}
\]
for $k=0,1,\ldots,s$.
Due to the pointwise constraints and the boundary condition
the operator
\[
\begin{array}{rcc}
  S_k\co V_k&\lra& W_k\times\R^n
  \\
  u&\longmapsto&\big(u_x+\rmi u_y,\partial_y^ku(1)\big)
\end{array}
\]
is well defined,
onto, and its $n(s-k)$-dimensional kernel
is generated by $p_{\ell,b}$
for $\ell=k+1,\ldots,s$ and $b\in\R^n$.

{\bf Step $\mathbf{2}$:}
Let $\Omega$ be a domain in $\Hp$
obtained by smoothing the corners
of the unit half-disc
such that $\partial\Omega\cap\R$
is an interval $I$ which contains $0$.
Let $\varphi$ be a conformal diffeomorphism
$(\Omega,0)\ra (D,1)$ up to the boundary.
Let $V_{\Omega}$ be the space of all
$\C^n$-valued functions
of Sobolev-class $W^{s+1,p}$ on $\Omega$
subject to the boundary condition
\[u(z)\in\rme^{s\tfrac{\rmi}{2}\arg\varphi(z)}\R^n,\]
$z\in\partial\Omega$,
where the argument is normalized by
$\arg(\rme^{2\pi \rmi\theta})=2\pi\theta$.
Abbreviate $W^{s,p}(\Omega,\C^n)$ by $W_{\Omega}$.
Define
\[
V_{\Omega,k}=
\big\{
u\in V_{\Omega}\;\big|\; D^{\alpha}u(0)=0\;\;\;\forall |\alpha|\leq k-1
\big\}
\]
and
\[
W_{\Omega,k}=
\big\{
f\in W_{\Omega}\;\big|\; D^{\beta}f(0)=0\;\;\;\forall |\beta|\leq k-2
\big\}
\]
and
\[
\begin{array}{rcc}
  S_{\Omega,k}\co V_{\Omega,k}&\lra& W_{\Omega,k}\times\R^n
  \\
  u&\longmapsto&\big(u_x+\rmi u_y,\partial_x^ku(0)\big).
\end{array}
\]
The invertible maps $u\mapsto u\circ\varphi$ and
\[
(f,h)\longmapsto
\big(
(\varphi_x^1+\rmi\varphi_y^1)f\circ\varphi,c^kh
\big)
\]
(using $\varphi=\varphi^1+\varphi^2$
and $\partial_x\varphi(0)=c\rmi$
for a positive constant $c$)
conjugate $S_{\Omega,k}$ to $S_k$.
Therefore, $S_{\Omega,k}$
is onto and has $n(s-k)$-dimensional kernel.

{\bf Step $\mathbf{3}$:}
Let $\beta\co\C\ra\C$ be a smooth function
such that $\beta$ is identically $1$ on the
interval $I$.
The assignment
\[
\partial\Omega\ni z
\longmapsto
L(z):=\beta(z)\R^n
\]
defines a smooth loop of totally real
subspaces of $\C^n$ that splits up into real line bundles
with Maslov index $s$ corresponding to
$\C\times\ldots\times\C$.
Moreover, $L(x)=\R^n$ for $x\in I$.
Because the Maslov index of $L$ is $ns$
there exists a smooth function $A\co\C\ra\gl(n,\C)$
such that $A(0)=\mathbbm{1}$ and
\[L(z)=A(z)\,\rme^{s\tfrac{\rmi}{2}\arg\varphi(z)}\R^n\]
for all $z\in\partial\Omega$,
cf.\ Arnol'd's theorem on \cite[p. 554]{mcsa04}.

Consider the operator
\[
\hat{Q}\co
V_{\Omega}\lra W_{\Omega}\times(\R^n)^{s+1}
\]
mapping
\[
u\longmapsto
\big(
u_x+\rmi u_y;\,
u(0),\partial_x(Au)(0),\ldots,\partial_x^s(Au)(0)
\big).
\]
Observe that
$\partial_x(Au)(0)=A_x(0)u(0)+u_x(0)$
for example.
In order to show surjectivity of
$\hat{Q}$ let $f\in W_{\Omega}$
and $b_0,b_1,\ldots,b_s\in\R^n$
be arbitrarily given.
Because $S_{\Omega,0}$ is onto
we find $v^0\in V_{\Omega}$
such that $v_x^0+\rmi v_y^0=f$ and $v^0(0)=b_0$.
Assume that $v^{\ell}\in V_{\Omega}$
is constructed inductively
such that $v_x^{\ell}+\rmi v_y^{\ell}=f$ and
\[v^{\ell}(0)=b_0,\ldots,\partial_x^{\ell}(Av^{\ell})(0)=b_{\ell}.\]
Considering the operator $S_{\Omega,\ell+1}$
we find a holomorphic $q\in V_{\Omega,\ell+1}$
such that
\[
\partial_x^{\ell+1}q(0)=b_{\ell+1}-\partial_x^{\ell+1}(Av^{\ell})(0).
\]
Set $v^{\ell+1}=v^{\ell}+q$
and observe that
$v_x^{\ell+1}+\rmi v_y^{\ell+1}=f$ and
\[
v^{\ell+1}(0)=b_0,\ldots,\partial_x^{\ell+1}(Av^{\ell+1})(0)=b_{\ell+1}.
\]
Therefore,
$u=v^s$ is a solution of
$\hat{Q}(u)=\big(f;\,b_0,\ldots,b_s\big)$,
i.e., $\hat{Q}$ is onto.

{\bf Step $\mathbf{4}$:}
We show that $\hat{Q}$ has trivial kernel.
Consider $u\in\ker\hat{Q}$.
If $u$ is not constant
so is at least
one of the coordinate functions $u^j\in\C$.
By \cite[Lemma 9.5]{geizeh10}
the sum of the orders of the zeros
of $u^j$ on the boundary
plus twice the orders of zeros
of $u^j$ at the interior equals the Maslov index $s$.
But $u^j$ vanishes
at least to the $(s+1)$-st order at zero.
This implies $u=0$.
Therefore, $\hat{Q}$ is invertible.

{\bf Step $\mathbf{5}$:}
Let $V_L$ be the space of all
$\C^n$-valued functions of
Sobolev-class $W^{s+1,p}$ on $\Omega$
such that $u(z)\in L(z)$
for all $z\in\partial\Omega$.
Consider the operator
\[
T\co V_L\lra W_{\Omega}\times(\R^n)^{s+1}
\]
mapping
\[
u\longmapsto
\big(
u_x+\rmi u_y;\,u(0),\partial_xu(0),\ldots,\partial_x^su(0)
\big).
\]
The operator
\[
Q=(A^{-1}\times\mathbbm{1})\circ T\circ A\co
V_{\Omega}\lra W_{\Omega}\times(\R^n)^{s+1}
\]
equals
\[
Q(u)=\hat{Q}(u)+\big(A^{-1}(A_x+\rmi A_y)u;\,0\ldots,0\big).
\]
The second term is a compact perturbation.
Because $\hat{Q}$ is invertible
the operator $Q$ is Fredholm of index zero.
Hence, $T$ is a Fredholm operator of index zero.
Because $L$ splits into real line bundles of Maslov index $s$
corresponding to the complex coordinate planes
an application of
\cite[Lemma 9.5]{geizeh10} as in Step $4$
shows triviality of $\ker T$.
Hence, the operator $T$ is invertible.

{\bf Step $\mathbf{6}$:}
Consider the map
\[
F\co V_L\lra W_{\Omega}\times(\R^n)^{s+1}
\]
defined by
\[
F(v)=
\big(
v_x+J(v)v_y;\,
v(0),\partial_xv(0),\ldots,\partial_x^sv(0)
\big).
\]
Taking the derivative of at $v=0$
proves the Proposition \ref{holhalfrep}
similar to the last step in the proof of Proposition \ref{repr}.
\hfill Q.E.D.


\section{The universal jet space\label{theunivjetsp}}

We consider a
$2n$-dimensional symplectic manifold
$(M,\omega)$,
a compatible almost complex structure $J_0$
with the induced metric $\omega(\,.\,,J_0\,.\,)$, and
a maximally totally real submanifold $L$.


\subsection{Almost complex structures\label{acstandfloer}}

The space $\JJ$ of all
compatible almost complex structures $J$
is a Fr\'echet manifold.
The model is
the space of symmetric
endomorphism fields $Y$ on $M$
which anti-commute with $J_0$.
The homeomorphism
\[Y\longmapsto J_0(\mathbbm{1}+Y)(\mathbbm{1}-Y)^{-1},\]
$\|Y\|_{C^0}<1$,
serves as a global parametrization for $\JJ$.
The inverse is
\[H\co J\longmapsto (J+J_0)^{-1}(J-J_0),\]
cf.\ \cite[Proposition 1.1.6]{aud94}.
For global considerations
we require in addition
the almost complex structures
to coincide with $J_0$
in the complement of a relatively compact
subset of $M$,
the so-called {\bf perturbation domain}.
If for example one considers
intersection problems of holomorphic curves with
$J_0$-holomorphic or maximally totally real submanifolds
with respect to $J_0$
one requires the manifolds
to be contained in the complement
of the perturbation domain.

Notice that
Lagrangian submanifolds
stay totally real after
a perturbation of the
almost complex structure $J$
provided $J$ is compatible
(or tamed) by $\omega$.
To achieve the same effect
for maximally totally real
submanifolds
one restricts to perturbations
that are $C^0$-close to $J_0$.
One way to formalize this
is the following construction
taken from \cite{geizeh13}:
Choose a metric $g$ on $M$
such that $J_0$ is orthogonal
and $J_0$ maps each tangent space
of $L$ to its orthogonal complement.
Notice that the restriction
of $g(J_0\,.\,,.\,)$ to $TL$ vanishes.
Following the arguments of
Weinstein's Lagrangian neighbourhood theorem,
see \cite[Theorem 3.33]{mcsa98},
one obtains a diffeomorphism
of a neighbourhood of $L$
onto a neighbourhood of the
zero section in $T^*L$
such that the pull-back $\omega_L$
of the Liouville symplectic form
coincides with $g(J_0\,.\,,.\,)$
on $TM|_L$.
Notice that $L$ is Lagrangian
with respect to $\omega_L$
and that $\omega_L$ tames $J_0$
in a neighbourhood of $L$.
Therefore,
we require in addition
all $J\in\JJ$ to be tamed by $\omega_L$
on the submanifold $L$.

In order to obtain a Banach manifold
of almost complex structure
we restrict to a further subset:
For a sequence $(\varepsilon_j)$
of positive real numbers
we consider infinitesimal
almost complex structures
that are finite in the {\bf Floer-norm}
\[\sum_{j=0}^{\infty}\varepsilon_j\|Y\|_{C^j}.\]
We choose $(\varepsilon_j)$
such that the resulting open subset is separable
and dense in $C^{\infty}$,
cf.\ \cite{sch95}.
The image under $H^{-1}$
is the separable Banach manifold $\II$
of compatible almost complex structures
with the induced {\bf Floer-topology}.


\subsection{Definition}

We consider interior points
$z_1,\ldots,z_{m_0}$ on $\Sigma$
and
boundary points
$x_1,\ldots,x_{m_1}$ on $\Gamma$.
For $k=1,\ldots,m_0$
let $u^k\co\Sigma\ra M$
be a germ of a smooth maps
with source $z_k$.
For $\ell=1,\ldots,m_1$
let $v^{\ell}\co\Gamma\ra L$
be a germ of a smooth maps
with source $x_{\ell}$,
where we think of the $v^{\ell}$
to be extended to a
smooth map from $\Sigma$
into $M$.
We will assume that the points
$\{z_1,\ldots,z_{m_0}\}$
and $\{x_1,\ldots,x_{m_1}\}$
are pairwise distinct.
Therefore,
the maps
$u^1,\ldots,u^{m_0},v^1,\ldots,v^{m_1}$
are restrictions of a
smooth map
$u\co(\Sigma,\Gamma)\ra(M,L)$
to a neighbourhood of
the source points
as indicated by the labels.
In other words,
$u$ represents a germ of maps
near the closed subset
$\{z_1,\ldots,z_{m_0},x_1,\ldots,x_{m_1}\}$
of $\Sigma$.
Furthermore,
about each of the points
$p_k=u^k(z_k)$,
resp.,
$p_{m_0+\ell}=v^{\ell}(x_{\ell})$,
we consider a germ
of almost complex structures
$J^k\in\JJ$, $k=1,\ldots,m_0$,
resp.,
$J^{m_0+\ell}\in\JJ$,
$\ell=1,\ldots,m_1$.
We remark
that at equal points
$p_{j_1}=p_{j_2}$,
$j_1\neq j_2$,
the almost complex structures
$J^{j_1}$ and $J^{j_2}$
may differ.
This means that the $J^j$'s
are \emph{not} in general
restrictions
of a global almost complex structure
$J\in\JJ$.
But in all our applications
such a global $J\in\JJ$ will exist.

In order to shorten the notation
we will use the following conventions:
We call the tuple
\[
(\mathbf{z},\mathbf{x}):=
(z_1,\ldots,z_{m_0},x_1,\ldots,x_{m_1})
\]
of pairwise distinct points
a {\bf source}.
The space of all such tuples,
the {\bf source space},
is denoted by $\SS$.
It is a subset of
\[
\SS\subset
(\Sigma,\Gamma)^{\mathbf{m}}
:=
\Sigma^{m_0}\times\Gamma^{m_1},\]
where we wrote
$\mathbf{m}=(m_0,m_1)$
for the number of source points.
The {\bf length}
by definition is
$\|\mathbf{m}\|=2m_0+m_1$.
Moreover, we write
\[
(M,L)^{\mathbf{m}}
:=
M^{m_0}\times L^{m_1}
\]
so that
\[
u^{\mathbf{m}}:=
(u^1,\ldots,u^{m_0},v^1,\ldots,v^{m_1})
\]
represents a germ of smooth maps from
$(\Sigma,\Gamma)^{\mathbf{m}}$ into
$(M,L)^{\mathbf{m}}$.
We call $u^{\mathbf{m}}$ a {\bf multi-germ}
with source $(\mathbf{z},\mathbf{x})\in\SS$.
Similarly,
a {\bf multi-germ} of almost complex structures
\[
J^{\mathbf{m}}:=
\big(
J^1,\ldots,J^{m_0},
J^{m_0+1},\ldots,J^{m_0+m_1}
\big)
\]
with source
\begin{eqnarray*}
  u(\mathbf{z},\mathbf{x})
  &:=&
  \big(
  u^1(z_1),\ldots,u^{m_0}(z_{m_0}),
  v^1(x_1),\ldots,v^{m_1}(x_{m_1})
  \big)
  \\
  &=&
  \big(
  p_1,\ldots,p_{m_0},p_{m_0+1},\ldots,p_{m_0+m_1}
  \big)
  \\
  &=:&
  \mathbf{p}
\end{eqnarray*}
is understood.
Observe, that the tuple $\mathbf{p}$
is \emph{not} required to consists of pairwise distinct points.
The space of all multi-germs of almost complex structures
is denoted by $\JJ^{\mathbf{m}}$.

We say that
\[
(u,J)^{\mathbf{m}}:=(u^{\mathbf{m}},J^{\mathbf{m}})
\]
is a {\bf holomorphic multi-germ}
with source $(\mathbf{z},\mathbf{x})$
provided that
$J^{\mathbf{m}}\in\JJ^{\mathbf{m}}$
is a multi-germ
with source
$u(\mathbf{z},\mathbf{x})$
and the coordinate maps
of $u^{\mathbf{m}}$
are germs of holomorphic maps
with source $(\mathbf{z},\mathbf{x})$
that are holomorphic with respect to
the entries of the tuple $J^{\mathbf{m}}$.
We think of $u^{\mathbf{m}}$
as a smooth map
$u\co (\Sigma,\Gamma)\ra (M,L)$,
whose restriction to
disjoint neighbourhoods of
the elements of the set
$\{\mathbf{z},\mathbf{x}\}$
are holomorphic with respect to
the (local) almost complex structures
as indicated by the entries of $J^{\mathbf{m}}$.

On the set of holomorphic multi-germs
$(u,J)^{\mathbf{m}}$ we define a
{\bf $(\mathbf{r},\mathbf{s})$-tangency relation}:
Consider a vector of non-negative integers
\[
\mathbf{t}:=
(\mathbf{r},\mathbf{s}):=
(r_1,\ldots,r_{m_0},s_1,\ldots,s_{m_1}).
\]
The {\bf length} is
\[
\|\mathbf{t}\|:=
2(r_1+\ldots+r_{m_0})+s_1+\ldots+s_{m_1}.
\]
Two holomorphic multi-germs
$(u_1,J_1)^{\mathbf{m}}$ and
$(u_2,J_2)^{\mathbf{m}}$
with the same source
$(\mathbf{z},\mathbf{x})$
are said to be
{\bf $\mathbf{t}$-equivalent}
provided that
\begin{itemize}
\item 
  $
  u_1(\mathbf{z},\mathbf{x})=
  u_2(\mathbf{z},\mathbf{x})=:\mathbf{p}
  $\;,
\item 
  the 
  $(\mathbf{t}-\mathbf{1})${\bf-jets}
  of $J_1^{\mathbf{m}}$ and $J_2^{\mathbf{m}}$ at $\mathbf{p}$
  coincide,
  i.e.,
  $j_{\mathbf{p}}^{\mathbf{t}-\mathbf{1}}J_1^{\mathbf{m}}=
  j_{\mathbf{p}}^{\mathbf{t}-\mathbf{1}}J_2^{\mathbf{m}}$,
  where we used the notation
  \[
  \qquad\quad
  j_{\mathbf{p}}^{\mathbf{t}-\mathbf{1}}J^{\mathbf{m}}=
  \big(
  j_{p_1}^{r_1-1}J^1,\ldots,j_{p_{m_0}}^{r_{m_0}-1}J^{m_0},
  j_{p_{m_0+1}}^{s_1-1}J^{m_0+1},\ldots,j_{p_{m_0+m_1}}^{s_{m_1-1}}J^{m_0+m_1}
  \big)
  \]
  of smooth jets,
  and
\item
  $
  j_{(\mathbf{z},\mathbf{x})}^{\mathbf{t}}u_1=
  j_{(\mathbf{z},\mathbf{x})}^{\mathbf{t}}u_2
  $,
  where we used the notation
  \[
  j_{(\mathbf{z},\mathbf{x})}^{\mathbf{t}}u=
  (j_{\mathbf{z}}^{\mathbf{r}}u,
  j_{\mathbf{x}}^{\mathbf{s}}u)=
  \big(
  j_{z_1}^{r_1}u,\ldots,j_{z_{m_0}}^{r_{m_0}}u,
  j_{x_1}^{s_1}u,\ldots,j_{x_{m_1}}^{s_{m_1}}u
  \big)
  \]
  of holomorphic jets.
\end{itemize}
As in Section \ref{jetdef}
one verifies that this is well defined.
The $\mathbf{t}$-equivalence class
of a holomorphic multi-germ $(u,J)^{\mathbf{m}}$
is denoted by
$j_{(\mathbf{z},\mathbf{x})}^{\mathbf{t}}(u,J)$.
The notion of a $\mathbf{t}$-tangency class
is use synonymously.
The space of all
$\mathbf{t}$-equivalence classes
$j_{(\mathbf{z},\mathbf{x})}^{\mathbf{t}}(u,J)$
is denoted by
\[
\Jet^{\mathbf{t}}.
\]


\subsection{An affine fibration\label{subsec:afffib}}

Let $E$ be the bundle over $M$ whose fibre at $p\in M$
consists of $\omega_p$-compatible complex structures
on the tangent space $T_pM$.
Observe, that the space of all smooth sections into $E$
coincides with $\JJ$.
We denote by $E^{(\mathbf{t}-\mathbf{1})}$
the space of $(\mathbf{t}-\mathbf{1})$-jets
of smooth sections into the bundle $E$.

\begin{prop}
  \label{univdiffstr}
  The natural map
  \[
  j_{(\mathbf{z},\mathbf{x})}^{\mathbf{t}}(u,J)
  \longmapsto
  \big(
  (\mathbf{z},\mathbf{x}),
  j_{u(\mathbf{z},\mathbf{x})}^{\mathbf{t}-\mathbf{1}}J^{\mathbf{m}}
  \big)
  \]
  is a locally trivial fibration
  \[\Jet^{\mathbf{t}}\lra\SS\times E^{(\mathbf{t}-\mathbf{1})}\]
  with affine fibre of dimension $n\|\mathbf{t}\|$.
\end{prop}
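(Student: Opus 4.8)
The plan is to produce explicit charts in which the projection becomes a coordinate projection. The key observation is that, in a local representation, a $\mathbf{t}$-tangency class $j_{(\mathbf{z},\mathbf{x})}^{\mathbf{t}}(u,J)$ is equivalent to the data consisting of the source $(\mathbf{z},\mathbf{x})$, the target $\mathbf{p}=u(\mathbf{z},\mathbf{x})$, the $(\mathbf{t}-\mathbf{1})$-jet of $J$ at $\mathbf{p}$, and the purely tangential partial derivatives $\partial_x^{\ell}u$ at the source points. Since a holomorphic multi-germ is by definition a product of germs at pairwise distinct sources, and since the germs of $J$ attached to the several (possibly coinciding) targets are allowed to differ from source to source, both the base $\SS\times E^{(\mathbf{t}-\mathbf{1})}$ and the projection split as products indexed by the source points. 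Hence it suffices to treat one interior source $z$ of order $r$ and one boundary source $x$ of order $s$ in isolation.

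For the interior source I would fix a local representation with $(h_*J)(0)=\rmi$ as in \ref{jetdef} and reconstruct the full $r$-jet from the seed data $\partial_xu(z),\ldots,\partial_x^ru(z)\in\R^{2n}$ by induction on the total order $a+b$ of the partial $\partial_x^a\partial_y^bu$, lowering the number $b$ of $y$-derivatives through the identity $\partial_x^a\partial_y^bu=\partial_x^a\partial_y^{b-1}\big(J(u)u_x\big)$ and the Leibniz and Fa\`a di Bruno rules. This writes $\partial_x^a\partial_y^bu$ as a universal polynomial in partials of $u$ of strictly smaller total or $y$-order and in the derivatives of $J$ at $\mathbf{p}$ of order at most $a+b-1$. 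Thus $u(z)$, the seed partials, and the $(r-1)$-jet of $J$ determine the whole $r$-jet, while conversely this data is read off from the jet; by Proposition \ref{repr} every tuple in $(\R^{2n})^r$ is realized by some holomorphic germ, so the fibre over $(z,j_{\mathbf{p}}^{r-1}J)$ is exactly $(\R^{2n})^r$ of dimension $2nr$. The boundary source is handled identically, using the boundary $s$-equivalence and Proposition \ref{holhalfrep} in place of \ref{repr}; the seed partials $\partial_x^{\ell}u(x)$ now lie in $T_{u(x)}L\cong\R^n$, giving the fibre $(\R^n)^s$ of dimension $ns$. Taking products over all sources yields the fibre dimension $\sum_i2nr_i+\sum_jns_j=n\|\mathbf{t}\|$.

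To get the charts of $\Jet^{\mathbf{t}}$ I would send $j_{(\mathbf{z},\mathbf{x})}^{\mathbf{t}}(u,J)$ to $(\mathbf{z},\mathbf{x})$, the induced jet coordinates of $J$ in $E^{(\mathbf{t}-\mathbf{1})}$, and the seed partials; by the previous step these are bijections onto open subsets of $\SS\times E^{(\mathbf{t}-\mathbf{1})}\times(\R^{2n})^{\sum r_i}\times(\R^n)^{\sum s_j}$ under which the natural map is the obvious coordinate projection, hence a trivial fibration. Comparing two local representations, the seed partials transform by the chain rule: the top-order term transforms linearly through the differential of the conformal source change and the target chart change, while the lower-order terms contribute an additive shift depending only on the base data. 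The transition maps are therefore fibrewise affine, so the fibration is locally trivial with affine fibre, just as the projection $(\Sigma\times M)_J^r\ra\Sigma\times M$ is an affine fibration.

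The hard part will be the bookkeeping in the reconstruction step, namely verifying that recovering the order-$r$ jet of $u$ never invokes derivatives of $J$ beyond order $r-1$; this is precisely what forces the shift $\mathbf{t}\mapsto\mathbf{t}-\mathbf{1}$ in the base and has to be established by the induction above. A secondary difficulty is to check that the representatives furnished by \ref{repr} and \ref{holhalfrep} depend smoothly on the prescribed partials and on the source, so that the charts above are diffeomorphisms and the affine transition functions are smooth; this smooth dependence is exactly the property that the holomorphic half-disc representation was constructed to supply.
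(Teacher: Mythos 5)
Your outline reproduces the paper's strategy: split over the sources, use the seed partials $\partial_x^{\ell}u$ as fibre coordinates, reconstruct the full Taylor polynomial from the Cauchy--Riemann equation to see that only the $(\mathbf{t}-\mathbf{1})$-jet of $J$ enters, invoke Propositions \ref{repr} and \ref{holhalfrep} for surjectivity onto the fibre, and check that the transition functions are affine. The reconstruction induction and the dimension count $n\|\mathbf{t}\|$ are correct and match the paper.

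The gap is in the $J$-direction. The base of the fibration is $\SS\times E^{(\mathbf{t}-\mathbf{1})}$, so your trivializing charts must be smooth as the jet of $J$ varies, not only as the source and the seed partials vary; your proposal addresses only the latter. Two ingredients are missing. First, the seed partials are read off in a local representation adapted to $J$, i.e.\ a target chart $h$ with $(h_*J)(0)=\rmi$ (resp.\ $h_*J=\rmi$ along $\R^n$ in the boundary case); since every nearby point of $\Jet^{\mathbf{t}}$ carries its own almost complex structure, you need a family of such normalizing charts depending smoothly on $J$. This is precisely what the paper constructs: in a fixed Darboux (resp.\ Weinstein) chart it sets $\varphi_J(\mathbf{x},\mathbf{y})=x^j\partial_{x^j}+y^jJ(\mathbf{x},\mathbf{y})\partial_{x^j}$ and transforms $(\mathbf{x},\mathbf{y},J)\mapsto\big(\varphi_J(\mathbf{x},\mathbf{y}),(\varphi_J)_*J\big)$, which normalizes all compatible $J$'s simultaneously and smoothly in $J$. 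Second, the holomorphic representative must depend smoothly on $J$ as well. Propositions \ref{repr} and \ref{holhalfrep}, as stated and as proved (inverse function theorem for one fixed $J$), give smooth dependence on the prescribed partials and on the source --- which is what you invoke --- but not on $J$; so your appeal to ``exactly the property that the holomorphic half-disc representation was constructed to supply'' overstates what those results deliver. The paper obtains the parametric statement by applying the implicit function theorem to $\FF(\mathbf{b},v,J)=\big(v_x+J(v)v_y;\,v(0)-b_0,\ldots,\partial_x^rv(0)-b_r\big)$ with $J$ as an explicit parameter, solving $\FF\big(\mathbf{b},v(\mathbf{b},J),J\big)=0$; this in turn requires the normalization above so that $\partial\FF/\partial v$ at $v=0$ is the standard invertible operator $T$. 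Without these two ingredients your charts are not known to be smoothly compatible in the $E^{(\mathbf{t}-\mathbf{1})}$-directions, so local triviality --- and indeed the smooth structure on $\Jet^{\mathbf{t}}$ itself, which these charts are supposed to define --- is not established.
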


\begin{proof}
  With help of a Hermitian trivialization
  and \cite[Proposition 1.1.6]{aud94}
  the fibre of $E\ra M$
  is diffeomorphic to the unit ball $W$
  in the $n(n+1)$-dimensional vector space
  of symmetric $(2n\times 2n)$-matrices
  that anti-commute with $\rmi$,
  cf.\ with the beginning of Section \ref{acstandfloer}.
  Hence, $E^{r-1}$ is modeled on
  $V\times W\times\R^{d_{r-1}}$
  (resp.\ $V'\times W\times\R^{d_{r-1}}$)
  with
  \[
  d_{r-1}=\frac{(2n+r-1)!}{(2n)!(r-1)!}n(n+1),
  \]
  where $V\subset\R^{2n}$
  (resp.\ $V'\subset\R^n$)
  is an open subset.
  As in Section \ref{gentang}
  the space of holomorphic
  $r$-jets (for fixed $J$)
  can be described locally by
  $U\times V\times \R^{2rn}$
  (resp.\ $U'\times V'\times \R^{rn}$)
  for open subsets $U\subset\Sigma$
  (resp.\ $U'\subset\Gamma$).
  
  We claim that
  the local model for $\Jet^{\mathbf{t}}$ is
  \[
  (U,U')^{\mathbf{m}}\times
  (V,V')^{\mathbf{m}}\times
  (\R^{2n},\R^n)^{\mathbf{t}}\times
  (W\times\R^{d_{r_1-1}})\times
  \ldots\times
  (W\times\R^{d_{s_{m_1}-1}}).
  \]
  Considering each $\mathbf{m}$-coordinate separately
  we represent jets of $J\in\JJ$
  by local $\omega$-compatible almost complex structures.
  For this we use Taylor polynomials
  and a uniform cut-off function
  in the model space $H(\JJ)$ of $\JJ$.
  Represent a holomorphic jet by a germ of holomorphic maps
  as in Section \ref{gentang}
  with respect to the constructed local $J$'s.
  Notice that a smooth variation of jets of $J\in\JJ$
  is followed by a smooth variation of local $J$'s.
  Moreover,
  a smooth variation of $\mathbf{a}=(a_0,a_1,\ldots,a_r)$
  induces a smooth variation of local holomorphic maps:
  We define
  \[
  \FF(\mathbf{b},v,J)=
  \big(
  v_x+J(v)v_y;
  \,v(0)-b_0,\partial_xv(0)-b_1,\ldots,\partial_x^rv(0)-b_r
  \big).
  \]
  In view of Proposition \ref{repr} and \ref{holhalfrep}
  the partial derivative
  \[
  \frac{\partial\FF}{\partial v}(\mathbf{b},0,J)\cdot\xi=T(\xi)
  \]
  is invertible.
  By the implicit function theorem
  there exists a local smooth map $v=v(\mathbf{b},J)$
  such that
  \[
  \FF\big(\mathbf{b},v(\mathbf{b},J),J\big)=0.
  \]
  The local map
  \[
  u
  \big(
  a_0,a_1,\ldots,a_r,J
  \big)(z)
  =
  v
  \big(
  a_0,\varepsilon a_1,\ldots,\varepsilon^ra_r,J
  \big)(z/\varepsilon)
  \]
  is $J$-holomorphic with $j_0^ru=\mathbf{a}$
  and varies smoothly with the data
  (including the variation
  given by translations
  of the source $z_0$ (resp.\ $x_0$)).
  
  The argument requires $J(0)=\rmi$
  (resp.\ $J=\rmi$ on $\R^n$) for $J\in\JJ(V)$
  and open subsets $V\subset\R^{2n}\equiv M$.
  In order to achieve this we
  take a symplectic Darboux chart in the interior case.
  Along the boundary we use a Weinstein chart
  that is constructed as follows:
  By Weinstein's neighbourhood theorem
  there exists a local symplectic embedding
  $(T^*L,\OO_{T^*L})\ra(M,L)$.
  If $L$ is not Lagrangian with respect to $\omega$
  we use the symplectic form
  $\omega_L$ constructed in Section \ref{acstandfloer}.
  Consider a local parametrization $\R^n\ra L$
  of $L$, which induces a symplectic embedding
  $(T^*\R^n,\R^n)\ra(T^*L,\OO_{T^*L})$
  in a natural way.
  The inverse of the composition $(T^*\R^n,\R^n)\ra(M,L)$
  restricted to the image is the desired
  Weinstein chart.
  
  Set
  \[
  \varphi_J(\mathbf{x},\mathbf{y})=
  x^j\partial_{x^j}+y^jJ(\mathbf{x},\mathbf{y})\partial_{x^j}.
  \]
  Because $J$ is compatible with
  (resp.\ tamed by) $\wst$
  and the $\partial_{x^j}$'s
  span the Lagrangian $\R^n$ about $0$
  (resp.\ on $\R^n$)
  the vector fields
  $\partial_{x^j},J(\mathbf{x},\mathbf{y})\partial_{x^j}$,
  $j=1,\ldots,n$, form a basis near the origin.
  The differential is
  $(\rmi,J)$-complex and invertible at $0$
  (resp.\ on $\R^n$).
  Transforming the position
  and the almost complex structures via
  \[
  (\mathbf{x},\mathbf{y},J)
  \mapsto
  \big(
  \varphi_J(\mathbf{x},\mathbf{y}),(\varphi_J)_*J
  \big)
  \]
  allows the use of the map $\FF$.
  This shows smoothness
  of the transition function
  which are affine, see \cite{geig03}.
\end{proof}

\begin{rem}
  \label{tangentvector}
  The proof yields a description
  of a tangent vector of
  $\Jet^{\mathbf{t}}$
  at a point
  $j_{(\mathbf{z},\mathbf{x})}^{\mathbf{t}}(u,J)$.
  Let $Y$ be a
  local infinitesimal almost complex structure
  in $T_J\JJ$
  and $\xi$ a variation of
  local holomorphic maps.
  That is,
  $\xi$ is a local vector field along $u$
  such that
  $(\mathbf{v},\mathbf{w})\in T_{(\mathbf{z},\mathbf{x})}\SS$,
  $\tilde{\xi}_{\mathbf{m}}=\xi_{\mathbf{m}}+Du_{\mathbf{m}}\cdot
  (\mathbf{v},\mathbf{w})$, and
  $\tilde{Y}=Y+DJ(u)\cdot\tilde{\xi}$
  solve
  \[
  \tilde{\xi}_x+J(u)\tilde{\xi}_y+\tilde{Y}(u)u_y
  =0.
  \]
  Hence, a tangent vector can be represented
  by the (in the sense of the above equation) holomorphic
  $\mathbf{t}$-equivalence class
  $j_{(\mathbf{v},\mathbf{w})}^{\mathbf{t}}(\tilde{\xi},Y)$
  of a multi-germ $(\tilde{\xi},Y)^{\mathbf{m}}$
  with source $(\mathbf{v},\mathbf{w})$,
  where the $\mathbf{t}$-equivalence relation
  is understood analogously
  to the case of holomorphic multi-germs $(u,J)^{\mathbf{m}}$.
\end{rem}


\subsection{Universal relations\label{univreldefinition}}

Denote by
$\big((\Sigma,\Gamma)\times(M,L)\big)^{(\mathbf{t})}$
the smooth $\mathbf{t}$-jet space
of maps $(\Sigma,\Gamma)\ra(M,L)$.
The jets with sources in
$\Gamma$ are computed via relative charts.
Because the $r$-th Taylor polynomial
of a $J$-holomorphic map
is completely determined
by the holomorphic $r$-jet
and the $(r-1)$-jet of $J$
there is a well defined map
\[
j_{(\mathbf{z},\mathbf{x})}^{\mathbf{t}}(u,J)
\longmapsto
\big(
j_{(\mathbf{z},\mathbf{x})}^{\mathbf{t}}u,
j_{u(\mathbf{z},\mathbf{x})}^{\mathbf{t}-\mathbf{1}}J^{\mathbf{m}}
\big).
\]
Because of the local description
given in the proof of
Proposition \ref{univdiffstr}
and Remark \ref{tangentvector}
this map is an embedding
\[
\NN\co
\Jet^{\mathbf{t}}
\lra
\big(
(\Sigma,\Gamma)\times(M,L)
\big)^{(\mathbf{t})}\times
E^{(\mathbf{t}-\mathbf{1})},
\]
whose projection to the second factor
is a surjective submersion.

\begin{defi}
  \label{def:hointrel}
  A submanifold $\RR$ of $\Jet^{\mathbf{t}}$
  is called a
  {\bf higher order intersection relation}
  if the image under the natural map $\NN$
  is a product manifold with second factor
  equal to $E^{(\mathbf{t}-\mathbf{1})}$.
  A relation $\RR$ is called {\bf source-free}
  if in all local fibre charts
  (as obtained in the proof of Proposition \ref{univdiffstr})
  $\RR$ is a product manifold
  with first factor equal to $(U,U')^{\mathbf{m}}$.
\end{defi}

In other words, a submanifold $\RR$ of $\Jet^{\mathbf{t}}$
is a higher order intersection relation
provided the definition of $\RR$ is \emph{$J$-independent}.
Because the first factor in a local description
of $\Jet^{\mathbf{t}}$ corresponds to the source space $\SS$
a relation $\RR$ is source-free if it
does not put restrictions on the marked points.
For example a relation
that fixes the marked points is never source-free.

With the previous discussion we obtain:

\begin{prop}
  \label{codimofr}
  Let $\RR$ be a higher order intersection relation.
  The intersection $R$ of $\RR$
  with any fibre over $E^{(\mathbf{t}-\mathbf{1})}$
  is a submanifold and the codimension of $\RR$ is
  \[\|\mathbf{m}\|+n(\|\mathbf{m}\|+\|\mathbf{t}\|)-\dim R.\]
  The intersection $R$ of a source-free relation $\RR$
  with any fibre over $\SS\times E^{(\mathbf{t}-\mathbf{1})}$
  is a submanifold and the codimension of $\RR$ is
  \[n(\|\mathbf{m}\|+\|\mathbf{t}\|)-\dim R.\]
\end{prop}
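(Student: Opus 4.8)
The plan is to reduce both statements to a dimension count in the local affine charts of Proposition \ref{univdiffstr}, using that the two hypotheses on $\RR$ are product conditions adapted to the projection onto the structure jets. First I would record the relevant fibration: the projection of the embedding from the Universal relations subsection onto its second factor is a surjective submersion $\pi\co\Jet^{\mathbf{t}}\to E^{(\mathbf{t}-\mathbf{1})}$, whose fibre $F$ over a fixed jet of $J$ is the fixed-structure holomorphic $\mathbf{t}$-jet space studied in Section \ref{gentang}. Summing the contributions of one interior $r$-jet ($2(1+n(r+1))$) and one boundary $s$-jet ($1+n(s+1)$) over the $\mathbf{m}$ sources, the source coordinates contribute $\|\mathbf{m}\|$ and the target-and-derivative coordinates contribute $n(\|\mathbf{m}\|+\|\mathbf{t}\|)$, so $\dim F=\|\mathbf{m}\|+n(\|\mathbf{m}\|+\|\mathbf{t}\|)$.

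Next I would invoke the local model of Proposition \ref{univdiffstr}, in which $\Jet^{\mathbf{t}}$ is a product of the source block $(U,U')^{\mathbf{m}}$, the target-and-holomorphic-jet block, and the structure block modelling $E^{(\mathbf{t}-\mathbf{1})}$. Since a higher order intersection relation is by definition $J$-independent, in such a chart $\RR$ contains the entire structure block through each of its points; locally it is therefore a product of an open set in the $E$-directions with a submanifold $R$ of the fibre $F$. Hence $\RR$ meets every fibre $\pi^{-1}(e)$ cleanly in the slice corresponding to $R$, so $R$ is a submanifold and $\codim_{\Jet^{\mathbf{t}}}\RR=\dim F-\dim R=\|\mathbf{m}\|+n(\|\mathbf{m}\|+\|\mathbf{t}\|)-\dim R$.

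For the source-free case I would run the same argument over the finer base $\SS\times E^{(\mathbf{t}-\mathbf{1})}$. A source-free (higher order intersection) relation splits off the source block $(U,U')^{\mathbf{m}}$ in addition to the structure block, so it is a product over both the source and the structure directions. Intersecting with a fibre of $\Jet^{\mathbf{t}}\to\SS\times E^{(\mathbf{t}-\mathbf{1})}$ now also fixes the $\|\mathbf{m}\|$ source coordinates, lowering the ambient fibre dimension to $n(\|\mathbf{m}\|+\|\mathbf{t}\|)$; the cleanness argument is unchanged and yields $\codim_{\Jet^{\mathbf{t}}}\RR=n(\|\mathbf{m}\|+\|\mathbf{t}\|)-\dim R$.

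The step requiring the most care is the assertion that $R$ is a submanifold, i.e.\ the cleanness of the intersection of $\RR$ with the fibres; this is exactly where the product descriptions of ``$J$-independent'' and ``source-free'' are indispensable, as they let one avoid any transversality hypothesis. To make the argument chart-independent I would have to check that the affine transition functions of Proposition \ref{univdiffstr} preserve the splitting into source, target-derivative, and structure blocks, so that the product conditions and the resulting $R$ glue to a global submanifold. Granting this compatibility, the remaining bookkeeping --- in particular identifying $\dim F$ with the jet-space dimension from Section \ref{gentang} --- is routine.
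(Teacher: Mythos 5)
Your proof is correct and is essentially the argument the paper leaves implicit: Proposition \ref{codimofr} follows directly from the local product model in the proof of Proposition \ref{univdiffstr}, where $J$-independence splits off the structure block, source-freeness additionally splits off the source block $(U,U')^{\mathbf{m}}$, and the codimension is the dimension of the complementary block minus $\dim R$. The one point to be aware of is that your identification of the fibre $F$ treats the target coordinates $(V,V')^{\mathbf{m}}$ as lying in the fibre rather than in $E^{(\mathbf{t}-\mathbf{1})}$; this is the reading forced by the stated codimension formulas (and by their use in the Theorem and in the adjunction inequality of Section \ref{aprioripertandlocb}), although it is in tension with the fibre dimension $n\|\mathbf{t}\|$ asserted in Proposition \ref{univdiffstr}, whose base includes the targets.
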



\subsection{Universal moduli space\label{univmodspaceisbanmanifold}}

Let $\Sigma$ be a Riemann surface
which is either closed or
compact with boundary $\Gamma$.
Let $\RR$ be a higher order intersection relation
of order $\mathbf{t}$ and
let $A$ be a relative $2$-homology class in $(M,L)$.
The {\bf universal moduli space} $\UU$
by definition is the set of all tuples
$(u,J,\mathbf{z},\mathbf{x})$,
where $(\mathbf{z},\mathbf{x})\in\SS$ are marked points,
$J\in\II$ is an almost complex structure as in
Section \ref{acstandfloer},
and $u$ is a $J$-holomorphic map
$(\Sigma,\Gamma)\ra(M,L)$ that represents the class $A$,
such that
\begin{itemize}
\item $j_{(\mathbf{z},\mathbf{x})}^{\mathbf{t}}(u,J)\in\RR$,
\item $u$ is {\bf simple}
  (i.e., the open set of injective points is dense)
  and {\bf simple along the boundary}
  (i.e., the open set of injective points
  of $u_{|\Gamma}$ is dense in $\Gamma$),
  see \cite[Corollary 8.5]{zehm13},
\item $u(\Sigma)$ is contained
  in the perturbation domain,
  see Section \ref{acstandfloer}.
\end{itemize}

\begin{prop}
  \label{unimodspacerel}
  The universal moduli space $\UU$
  is a separable Banach manifold.  
\end{prop}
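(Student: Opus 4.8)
The plan is to realize $\UU$ as the zero set of a smooth Fredholm-type section over an ambient separable Banach manifold, and then to verify that this section is transverse to the zero section by exploiting the freedom to vary the almost complex structure $J\in\II$. The standard reference architecture is the Sard–Smale/implicit-function-theorem argument of McDuff–Salamon \cite[Chapter 3]{mcsa04}; the novelty here is that the jet constraint $\RR$ and the prescribed sources $(\mathbf z,\mathbf x)$ must be incorporated into the domain and into the section simultaneously.

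Let me check the pieces. First I would fix Sobolev exponents $k,p$ with $k\ge 1$, $p>2$, $kp>2$, and form the ambient manifold $\BB\times\SS\times\II$, where $\BB$ is the Banach manifold of $W^{k,p}$-maps $(\Sigma,\Gamma)\to(M,L)$ representing $A$, $\SS$ is the source space of Definition \ref{jetdef}'s successor (pairwise distinct marked points), and $\II$ is the separable Banach manifold of almost complex structures with Floer-topology from \ref{acstandfloer}. Over this one has the usual bundle whose fibre at $(u,\mathbf z,\mathbf x,J)$ is $W^{k-1,p}\big(\Lambda^{0,1}u^*TM\big)$, with the Cauchy–Riemann section $\overline\partial_J u=\tfrac12(Tu+J(u)\circ Tu\circ\rmi)$. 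Next I would adjoin the jet-evaluation constraint: using the embedding $\Jet^{\mathbf t}\hookrightarrow\big((\Sigma,\Gamma)\times(M,L)\big)^{(\mathbf t)}\times E^{(\mathbf t-\mathbf 1)}$ established after Proposition \ref{univdiffstr}, the assignment $(u,\mathbf z,\mathbf x,J)\mapsto j^{\mathbf t}_{(\mathbf z,\mathbf x)}(u,J)$ is a smooth map into $\Jet^{\mathbf t}$, and $\UU$ is cut out by requiring this to land in the submanifold $\RR$ together with $\overline\partial_J u=0$ and the simplicity/perturbation-domain open conditions. So the whole thing is a smooth section of a Banach bundle whose base is the preimage of $\RR$ under the jet map.

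The heart of the argument is surjectivity of the linearized operator. At a solution $(u,\mathbf z,\mathbf x,J)$ the vertical differential is
\[
(\xi,(\mathbf v,\mathbf w),Y)\longmapsto\Big(D_u\xi+\tfrac12 Y(u)\circ Tu\circ\rmi,\;\pi_{N\RR}\,Dj^{\mathbf t}_{(\mathbf z,\mathbf x)}(u,J)\cdot(\xi,(\mathbf v,\mathbf w),Y)\Big),
\]
where $D_u$ is the standard linearized $\overline\partial$-operator and $\pi_{N\RR}$ projects onto the normal bundle of $\RR$. Because $D_u$ is Fredholm, the first component has closed image of finite codimension, and the cokernel is spanned by $L^q$-sections $\eta$ with $D_u^*\eta=0$; I would show these cannot annihilate the whole image by pairing against perturbations $Y$ supported near an injective point — here simplicity of $u$ (and simplicity along $\Gamma$, from \cite[Corollary 8.5]{zehm13}) guarantees injective points exist densely, so $Y$ can be chosen freely at such a point to kill any nonzero $\eta$, exactly as in the somewhere-injective case. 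The genuinely new work is matching the jet-evaluation component: I must confirm that Remark \ref{tangentvector}'s description of $T\Jet^{\mathbf t}$ via the lifted pair $(\tilde\xi,Y)$ makes $Dj^{\mathbf t}_{(\mathbf z,\mathbf x)}$ surjective onto $N\RR$ after fixing $\overline\partial_J u=0$, i.e.\ that one can prescribe the $\mathbf t$-jet data independently of the Cauchy–Riemann constraint. This is where Propositions \ref{repr} and \ref{holhalfrep} enter, since they assert precisely that holomorphic germs realize arbitrary jet values; combined with the local normal form $\FF$ from the proof of Proposition \ref{univdiffstr}, the variation $Y$ of $J$ near the source points moves the jet $j^{\mathbf t}_{(\mathbf z,\mathbf x)}(u,J)$ onto a full complement of $T\RR$.

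I expect the \textbf{main obstacle} to be the interaction between the two constraints at the same marked points: the perturbation $Y$ that surjects onto the $\overline\partial$-cokernel must be chosen at an injective point away from the sources $\mathbf z,\mathbf x$ (so as not to disturb the $(\mathbf t-\mathbf 1)$-jet of $J$ that is already fixed by landing in $\Jet^{\mathbf t}$), while the perturbation that surjects onto $N\RR$ acts through the jet of $J$ at the sources. One must therefore split the perturbation space, using disjoint supports, and invoke density of injective points to separate the two tasks; the source-free versus general distinction of Proposition \ref{codimofr} controls whether the $(\mathbf v,\mathbf w)$-directions are available to help. Once surjectivity is established, the implicit function theorem gives that $\UU$ is a smooth Banach manifold, and separability is inherited from $\II$ and from second-countability of the finite-dimensional factors $\BB$, $\SS$.
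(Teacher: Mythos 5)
Your overall architecture---cutting $\UU$ out of $\BB\times\SS\times\II$ by the Cauchy--Riemann section together with the jet-evaluation constraint and verifying surjectivity of the combined linearization---is a legitimate repackaging of the paper's two-step argument (the paper quotes from \cite[Chapter 3]{mcsa04} that $\UU_{\emptyset}$ is a separable Banach manifold, realizes $\UU$ as the preimage of $\RR$ under the jet extension map $j^{\mathbf{t}}\co\UU_{\emptyset}\times\SS\ra\Jet^{\mathbf{t}}$, proves transversality of $j^{\mathbf{t}}$ to $\RR$, and concludes with \cite[Section II.2]{lang99}). The genuine gap lies in your mechanism for the jet part. You assert that ``the variation $Y$ of $J$ near the source points moves the jet $j^{\mathbf{t}}_{(\mathbf{z},\mathbf{x})}(u,J)$ onto a full complement of $T\RR$'', and your final splitting strategy ($Y$ away from the sources for the $\DB$-cokernel, $Y$ at the sources for $N\RR$) rests on this. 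That cannot work. An intersection relation is by definition $J$-independent: under the embedding of $\Jet^{\mathbf{t}}$ into $\big((\Sigma,\Gamma)\times(M,L)\big)^{(\mathbf{t})}\times E^{(\mathbf{t}-\mathbf{1})}$ its image is a product with the whole second factor, which is exactly why motions of the $(\mathbf{t}-\mathbf{1})$-jet of $J$ cannot be responsible for transversality; worse, in the most important case of relations of order $\|\mathbf{t}\|=0$ (the double-point relations of \ref{aprioripertandlocb}) a variation of $J$ does not move the $0$-jet $u(\mathbf{z},\mathbf{x})$ at all, so your perturbation contributes nothing normal to $\RR$. The transverse directions must be produced by variations of the map itself: $J$-independence of $\RR$ lets one normalize a transverse tangent vector to have vanishing $Y$-component (Remark \ref{tangentvector}), i.e.\ to be of the form $j^{\mathbf{t}}_{(\mathbf{v},\mathbf{w})}(\tilde{\xi},0)$ for a local holomorphic section $\tilde{\xi}$ of $(u^*TM,u^*TL)$ near the marked points---and it is here, not through $Y$, that Propositions \ref{repr} and \ref{holhalfrep} enter, since they show that such $\tilde{\xi}$ realize arbitrary jet values. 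You cite the right propositions but then reassign their role to the wrong perturbation.

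Even after this correction a second step is missing: the local holomorphic $\tilde{\xi}$ must be extended to a genuine tangent vector of the ambient moduli problem without disturbing its jet at the marked points, i.e.\ one needs a global section $\xi$ extending $\tilde{\xi}$ and an infinitesimal structure $Y$ with vanishing jets at the targets such that $(\xi,Y)$ solves the linearized equation. Your appeal to ``disjoint supports'' and density of injective points names this difficulty but does not resolve it; the paper resolves it by invoking the annulus property and the half-annulus property of $u$ (\cite[Theorem 1.1]{zehm13}, \cite[Theorem 1.3 and Corollary 9.5]{zehm13}), which are precisely what makes the extension argument of \cite[p.~63]{mcsa04}, together with \cite[Exercise 3.4.5]{mcsa04}, go through in the presence of a Lagrangian boundary condition and marked points on $\Gamma$. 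Transversality to $\RR$ is then concluded using that $\Jet^{\mathbf{t}}$ is finite-dimensional, so only finitely many normal directions need to be spanned. Finally, a small slip: your ambient factor $\BB$ is an infinite-dimensional (separable) Banach manifold of Sobolev maps, not a finite-dimensional one, though separability of $\UU$ does indeed follow as you say.
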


\begin{proof}
  The universal moduli space
  for the empty relation $\UU_{\emptyset}$
  (i.e., $\mathbf{m}=(0,0)$)
  is a separable Banach manifold,
  see \cite[Chapter 3]{mcsa04}.
  In the presence of a relation $\RR$
  we consider the jet extension map
  \[
  (u,J,\mathbf{z},\mathbf{x})
  \longmapsto
  j_{(\mathbf{z},\mathbf{x})}^{\mathbf{t}}(u,J).
  \]
  The preimage of $\RR$ under
  \[
  j^{\mathbf{t}}\co
  \UU_{\emptyset}\times\SS\lra\Jet^{\mathbf{t}}
  \]
  equals $\UU$.
  We consider a tangent vector of
  $\Jet^{\mathbf{t}}$ at
  $j_{(\mathbf{z},\mathbf{x})}^{\mathbf{t}}(u,J)$
  that is transverse to $\RR$.
  By $J$-independence of $\RR$
  we are free to assume
  that the tangent vector is taken
  with respect to the trivial infinitesimal
  almost complex structure,
  see Remark \ref{tangentvector}.
  This means that for
  $(\mathbf{v},\mathbf{w})\in T_{(\mathbf{z},\mathbf{x})}\SS$
  and a local holomorphic section
  $\xi$ of $(u^*TM,u^*TL)$
  near $\{\mathbf{z},\mathbf{x}\}$
  the tangent vector is given by
  \[
  j_{(\mathbf{v},\mathbf{w})}^{\mathbf{t}}(\tilde{\xi},0).
  \]
  The holomorphic curve $u$
  has the annulus property
  (see \cite[Theorem 1.1]{zehm13})
  and the half-annulus property,
  see \cite[Theorem 1.3 and Corollary 9.5]{zehm13}.
  Therefore,
  as on \cite[p.\ 63]{mcsa04}
  $\xi$ extends to a smooth global section $\xi$
  such that $(\xi,Y)$ is tangent to $\UU_{\emptyset}$
  for an infinitesimal almost complex structure
  $Y\in\II$,
  see \cite[Exercise 3.4.5]{mcsa04}
  or \cite[Lemma 4.45]{wen10b}.
  Because the universal $\mathbf{t}$-jet space
  has finite dimension
  the $\mathbf{t}$-jet extension map
  is transverse to $\RR$.
  The claim follows with \cite[Section II.2]{lang99}.
\end{proof}

\begin{rem}
  \label{wellknowncase}
  If $\Gamma=\emptyset$,
  Proposition \ref{unimodspacerel} follows from
  \cite[Lemmata 6.5, 6.6]{ciemoh07}
  and \cite[Chapter 3]{mcsa04}.
  Moreover,
  it is enough to require the holomorphic curves
  to intersect the perturbation domain non-trivially.
  If $\Gamma\neq\emptyset$ and $\|\mathbf{t}\|=0$,
  Proposition \ref{unimodspacerel}
  follows with \cite[Chapter 3]{mcsa04}.
  In that case the assumptions can be relaxed:
  Each connected component of $\Sigma$
  has an injective point of $u$
  which is mapped to the perturbation domain.
  The reader is referred to Appendix
  \ref{locpertapp}.
\end{rem}


\subsection{Generic perturbation\label{subsec:genpert}}

By the argument
in Section \ref{univmodspaceisbanmanifold}
the universal moduli space
$\UU$ is a submanifold in $\UU_{\emptyset}\times\SS$
with co-dimension given by Proposition \ref{codimofr}.
Moreover,
the projection $\UU_{\emptyset}\ra\II$
is a smooth Fredholm map
of index $\mu(A)+n\chi(\Sigma)$,
where $\mu$ denotes the Maslov index
and $\chi$ the Euler characteristic,
see \cite[Chapter 3 and Appendix C]{mcsa04}.
Therefore,
the induced map
$\UU\subset\UU_{\emptyset}\times\SS\ra\II$
is Fredholm of index
$\mu(A)+n\chi(\Sigma)+\|\mathbf{m}\|-\codim\RR$.
By the Sard-Smale theorem
the set of regular values
is of second Baire category in $\II$
so that by the implicit function
theorem the preimage $\UU_J$
of a regular value $J\in\II$
is a manifold of dimension given by the index,
cf.\ \cite[Appendix A]{mcsa04}.
We call $\UU_J$ the $\RR${\bf -moduli space}
and write $\UU_{\emptyset,J}$
in the case of the empty relation.
Notice that
$J$ is a regular value
if $J$ is regular in the sense
of \cite[Definition 3.1.4]{mcsa04},
i.e.,
the linearized Cauchy-Riemann operator at
$u\in\UU_{\emptyset,J}$
is transverse to the zero-section
for all $u\in\UU_{\emptyset,J}$,
and the jet extension map
$j^{\mathbf{t}}\co\UU_{\emptyset,J}\times\SS\ra\jet^{\mathbf{t}}$
is transverse to $R$.
We call $J$ $\RR${\bf -regular} or {\bf generic}.

\begin{thm}
  \label{mainthmfirstfrom}
  The set of all
  $\RR$-regular almost complex structures
  is of second Baire category in $\II$
  (and therefore dense in $\JJ$).
  For all $\RR$-regular $J\in\II$
  the $\RR$-moduli space
  is a manifold of dimension
  \[
  \mu(A)+
  n\big(
  \chi(\Sigma)-\|\mathbf{m}\|-\|\mathbf{t}\|
  \big)+
  \dim R.
  \]
  If $\RR$ is source-free the dimension is
  \[
  \mu(A)+
  n\big(\chi(\Sigma)-\|\mathbf{t}\|\big)+
  (1-n)\|\mathbf{m}\|+
  \dim R.
  \]
\end{thm}

After an additional perturbation of $J\in\II$
smoothness of $\RR$-moduli spaces holds for holomorphic maps
that have an injective point on each connected component of $\Sigma$,
see Theorem \ref{transforallsomewinjcurves} below.
Moreover, Theorem \ref{transforallsomewinjcurves}
will allow us to work with the $C^{\infty}$-topology.


\section{Enumerative relations\label{enumrel}}

We assume the dimension of $M$ to be greater or equal than $4$.


\subsection{A priori perturbation and local behaviour\label{aprioripertandlocb}}

We consider holomorphic curves $u$
such that on each connected component
there exists an injective point
which is mapped into the perturbation domain
under $u$.
By Remark \ref{wellknowncase}
and \cite[Chapter 3]{mcsa04}
(cf.\ Appendix \ref{locpertapp})
the following moduli problem
can be assumed to be transverse
for a generic choice of $J\in\JJ$:
\begin{itemize}
\item $[u]=A$,
\item $u$ has $n_0$ interior,
  $n_1$ boundary,
  and $\ell$ mixed double points,
  i.e.,
  $u$ is subject to the intersection relation
  \[
  \big(\Delta_M\big)^{n_0}\times
  \big(\{(p,q)\in M\times L\;|\;p=q\}\big)^{\ell}\times
  \big(\Delta_L\big)^{n_1}
  \]
  of order
  $\|\mathbf{t}\|=0$
  and length $\|\mathbf{m}\|=(2n_0+\ell,2n_1+\ell)$.
\end{itemize}
Because $\|\mathbf{t}\|=0$
in the present situation
we can indeed work with $\JJ$
instead of $\II$
regardless whether the boundary of $\Gamma$
is empty or not.

Taking the dimension formula into account
this leads to the following bound
on the number of self-intersections
(cf.\ with \cite{bircor07,lazz11}
for similar considerations):
\[
(n-2)\|(n_0,n_1)\|+(2n-3)\ell\leq\mu(A)+n\chi(\Sigma).
\]
Therefore,
$u$ has finitely many double points provided $n\geq3$.
In particular,
$u$ is simple and simple along the boundary.
For $n=2$ there are finitely many mixed double points.
With the results in \cite{lazz00,lazz11}
the bound on the number of mixed intersection points
implies that $u$ is simple.
For an alternative argument for this
we refer to Remark \ref{aprioripertrem}
below.
However,
an even stronger result holds true.
In addition,
$u$ is simple along the boundary:

\begin{cor}
  \label{aprioripert}
  There exists a subset $\JJ_{\infty}$
  of second Baire category in $\JJ$
  such that for all $J\in\JJ_{\infty}$
  any somewhere injective
  $J$-holomorphic curve is simple
  and simple along the boundary.
\end{cor}

\begin{proof}
  $\JJ_{\infty}$ is the intersection
  of sets of second category,
  where the intersection is taken over all
  relative homology $2$-classes $A$ in $(M,L)$
  (which is known to be countable)
  and all intersection relations as above
  counted by $n_0$, $n_1$, and $\ell$.
  This proves the corollary in dimension
  $2n\geq6$.
  
  For $2n=4$ it suffices to establish simplicity
  along the boundary.
  We consider the following moduli problem
  taking Remark \ref{wellknowncase} into account:
  Let $N$ be a natural number.
  Let $\PP_N$ be a partition of $\Gamma$
  into $N$ segments
  (i.e., connected open subsets)
  of equal length.
  We equip each segment $S\in\PP_N$
  with pairwise distinct points
  $y_1,\ldots,y_N\in S$.
  The holomorphic curves $u$
  are assumed to be somewhere injective.
  We require
  \begin{itemize}
  \item $[u]=A$,
  \item there are pairwise distinct points
    $x_1,\ldots,x_N\in\Gamma$ each different from
    the $y_j$'s
    such that $u(x_j)=u(y_j)$ for all $j=1,\ldots,N$.
  \end{itemize}
  Let $\JJ_{\infty}$ be the intersection of all
  regular values in $\JJ$ taken over all $A$
  and $N$ corresponding to the moduli problems.
  Hence, the moduli spaces $\{(u,x_1,\ldots,x_N)\}$
  are cut out transversely
  for $J\in\JJ_{\infty}$
  and are of dimension $\mu(A)+2\chi(\Sigma)-N$.
  
  Arguing by contradiction
  we suppose that $u$ is a somewhere
  injective holomorphic curve
  which is not simple along the boundary.
  By \cite[Proposition 9.3]{zehm13}
  there exists a diffeomorphism
  $\varphi\co S_1\ra S_2$
  between two disjoint segments of $\Gamma$
  such that $u(x)=u\big(\varphi(x)\big)$
  for all $x\in S_1$.
  Let $N>\mu([u])+2\chi(\Sigma)$
  be a natural number such that the length
  of a segment in $\PP_N$ is smaller
  than $1/2$ times the length of $S_2$.
  Let $S\in\PP_N$ be a segment
  which is contained in $S_2$.
  In particular,
  with $x_j:=\varphi^{-1}(y_j)$
  we obtain $u(x_j)=u(y_j)$ for all $j=1,\ldots,N$.
  Therefore, $(u,x_1,\ldots,x_N)$
  is an element of one of the above moduli spaces
  which has a negative dimension.
  This contradiction proves the claim.
\end{proof}

\begin{rem}
  \label{aprioripertrem}
  A similar argument shows simplicity of $u$.
  Replace $\PP_N$ by a covering of
  open balls of
  radius $1/N$ each equipped with $N$ distinct points.
  Simplicity fails if there is a diffeomorphism
  $\varphi\co U_1\ra U_2$ between two open disjoint
  subsets of $\Sigma$ such that
  $u(z)=u\big(\varphi(z)\big)$
  for all $z\in U_1$,
  see \cite[Proposition 2.7]{zehm13}.
  Choose $N>\tfrac12(\mu([u])+2\chi(\Sigma))$
  such that there is a ball in $\PP_N$
  which is a subset of $U_2$.
  
  For reasons of beauty
  we give one more argument.
  Because generically
  there are finitely many
  mixed double points
  the weak and the strong variant
  of simplicity along the boundary
  considered in \cite{zehm13}
  are the same,
  see \cite[Corollary 8.5]{zehm13}.
  By \cite[Proposition 6.4]{zehm13}
  any holomorphic curve
  that is strongly simple along the boundary
  is in fact simple.
  The argument in the proof
  of Corollary \ref{aprioripert},
  which shows weak simplicity along the boundary,
  suffices to show simplicity.
\end{rem}


\subsection{Generic multiply covered discs\label{multicoverddiscs}}

A non-constant holomorphic disc $u$
(i.e., $\Sigma=\D\subset\C$
provided with the complex structure
induced by $\rmi$)
is called {\bf multiply covered}
if there exist
a simple holomorphic disc $v$
and a holomorphic map
$\pi\co (\D,\partial\D)\ra(\D,\partial\D)$
such that $u=v\circ\pi$,
where $\pi$ is required to be
of degree $\geq2$,
continuous up to the boundary,
and to satisfy
$\pi^{-1}(\partial\D)=\partial\D$.
In \cite[Theorem B]{lazz11} Lazzarini
proved the remarkable fact
that generically all non-constant
holomorphic discs attached to $L$
are simple or multiply covered
provided $\dim M\geq6$.
We drop the restriction to the dimension.

\begin{cor}
  For generic $J\in\JJ$
  each non-constant holomorphic discs
  which is contained in the perturbation domain
  is simple or multiply covered.
\end{cor}

\begin{proof}
  Lazzarini's proof of \cite[Theorem B]{lazz11}
  is based on his decomposition theorem
  \cite[Theorem A]{lazz11}.
  Each non-constant holomorphic disc $u$
  can be cut along an embedded graph
  in $\D$ into finitely many
  simple holomorphic discs
  whose union has the same image as $u$
  and whose homology classes
  weighted with positive multiples
  add up to $[u]$.
  On \cite[p.\ 254/5]{lazz11}
  he shows that the graph equals $\partial\D$
  using (\cite[Proposition 5.15]{lazz11},
  which can replaced by)
  the following two results:
  \begin{itemize}
  \item Let $v$ be a simple holomorphic disc.
    Then there exists no diffeomorphism
    $\varphi\co S_1\ra S_2$
    between two disjoint open segments
    on $\partial\D$ such that $v=v\circ\varphi$.
  \item Let $v_1$ and $v_2$
    be simple holomorphic discs
    such that
    $v_1(\D)\not\subset v_2(\D)$ and
    $v_2(\D)\not\subset v_1(\D)$.
    Then there exists no diffeomorphism
    $\varphi\co S_1\ra S_2$
    between two disjoint open segments
    on $\partial\D$ such that $v_1=v_2\circ\varphi$.
  \end{itemize}
  Notice that $v_1,v_2$ define a somewhere injective
  holomorphic map on $\D\sqcup\D$ in the sense
  that each connected component
  has an injective point.
  An application of the argument from Corollary \ref{aprioripert}
  proves both items.
  This establishes Lazzarini's theorem for $\dim M=4$.
\end{proof}


\subsection{Addendum\label{addenthm}}

Using Corollary \ref{aprioripert}
we can generalize
Theorem \ref{mainthmfirstfrom}
to somewhere injective holomorphic curves.
In view of Remark \ref{wellknowncase}
additional arguments are only required in
the case where the relation puts conditions
on the holomorphic jets
of order greater or equal than $1$ along the boundary.
In addition, Corollary \ref{aprioripert}
allows us to replace $\II$ by $\JJ$:

\begin{thm}
  \label{transforallsomewinjcurves}
  Let $\RR$ be a higher order jet relation
  of order $\mathbf{t}$.
  Then there exists a subset $\JJ_{\mathrm{gen}}$
  of second Baire category in $\JJ$
  such that for all $J\in\JJ_{\mathrm{gen}}$
  the conclusion
  of the Theorem \ref{mainthmfirstfrom}
  holds for somewhere injective
  $J$-holomorphic curves
  that are contained in the perturbation domain.
\end{thm}

\begin{proof}
  Theorem \ref{mainthmfirstfrom}
  provides a subset of
  $\RR$-regular almost complex structures
  of $\II$
  that is of second Baire category in $\II$
  and, therefore, dense in $\JJ$.
  Moreover,
  Corollary \ref{aprioripert}
  can be obtain in the weaker form
  using $\II$ instead of $\JJ$.
  This means 
  that there exists a subset of $\II$
  of second Baire category
  such that all corresponding
  somewhere injective holomorphic curves
  are simple and simple along the boundary.
  Taking the intersection of both
  second Baire category sets,
  which is dense in $\JJ$,
  Theorem \ref{transforallsomewinjcurves}
  follows with respect to the Floer-topology $\II$.
  
  The stronger formulation of the theorem,
  which allows to work with the $C^{\infty}$-topology,
  follows with the Taubes trick
  (see \cite[p.\ 52-53]{mcsa04}
  and \cite[Lemma 4.50]{wen10b})
  as it was pointed out to the author
  by the referee:
  In view of the assumptions of the theorem
  the universal moduli space $\UU$
  is understood to refer to holomorphic curves
  that are somewhere injective
  (not only simple and simple along the boundary).
  Similarly,
  we proceed with the moduli spaces $\UU_J$.
  
  Consider a sequence of compact subsets $K_{\nu}$ of $M$
  such that $K_{\nu}\subset\Int K_{\nu+1}$
  and $M=\bigcup_{\nu\in\N}K_{\nu}$.
  Let $\UU_J^{\nu}$ be the subset of
  $J$-holomorphic curves
  $(u,\mathbf{z},\mathbf{x})\in\UU_J$
  satisfying the following conditions:
  \begin{itemize}
  \item 
    The image $u(\Sigma)$ is contained in $K_{\nu}$.
  \item 
    $\sup_{\Sigma}|Tu|\leq\nu$.
  \item 
    On any connected component of $\Sigma$
    there exists a point $z_0$
    such that
    \[
    \inf_{z\neq z_0}
    \frac
    {\dist\big(u(z),u(z_0)\big)}
    {\dist(z,z_0)}
    \geq\frac{1}{\nu}\;.
    \]
  \item 
    The distance
    of $(\mathbf{z},\mathbf{x})$
    to $(\Sigma\times\Gamma)^{\mathbf{m}}\setminus\SS$
    is greater or equal to $1/\nu$.
  \item 
    For a suitable metric on $\Jet^{\mathbf{t}}$
    the distance of $j_{(\mathbf{z},\mathbf{x})}^{\mathbf{t}}(u,J)$
    to the boundary of $\RR$
    is greater or equal to $1/\nu$.
  \end{itemize}
  Define $\JJ_{\nu}$ to be the set
  of almost complex structures
  $J\in\JJ$ that are $\RR$-regular
  for all holomorphic curves in $\UU_J^{\nu}$.  
  As explained in the first part of the proof
  $\JJ_{\nu}$ is dense in $\JJ$.
  Because the set $\UU_J^{\nu}$
  is compact the set $\JJ_{\nu}$
  is open as well.
  Let $\JJ_{\mathrm{gen}}$
  be the intersection
  $\bigcap_{\nu\in\N}\JJ_{\nu}$,
  which is of second Baire category in $\JJ$.
  This implies the theorem
  because the union $\bigcup_{\nu\in\N}\UU_J^{\nu}$
  equals $\UU_J$.
\end{proof}


\subsection{Uniform bounds on self-intersection points\label{secadjineq}}

The group of conformal automorphisms $G$
of $(\Sigma,\Gamma)$
acts on the $\RR$-moduli space
for any source-free relation $\RR$ by
\[
g\cdot (u,\mathbf{z},\mathbf{x})=
\big(
u\circ g^{-1},g(\mathbf{z}),g(\mathbf{x})
\big).
\]
The quotient
\[
\MM_R=\UU_J/G
\]
is a smooth manifold
provided the action is free.
The dimension equals
\[
\mu(A)+
n\big(\chi(\Sigma)-\|\mathbf{t}\|\big)+
(1-n)\|\mathbf{m}\|+
\dim R-d,\]
where $d$ is the dimension of $G$.
We only will consider the case
where the action is free.
Moreover, we do not consider variations
of complex structures on the surface $\Sigma$
as in \cite{wen10};
but we remark that an adaptation of the method
of the present work is done by minor changes.

A consequence of the discussion
in Section \ref{aprioripertandlocb} is:

\begin{cor}
  \label{coradineq}
  For a generic choice of $J\in\JJ$
  and all $J$-holomorphic curves $u$
  which have an injective point on each
  connected component of $\Sigma$
  which are mapped into the perturbation domain by $u$
  we have:
  \[(n-2)\|(n_0,n_1)\|+(2n-3)\ell\leq\mu([u])+n\chi(\Sigma)-d.\]
  Here $n_0$ (resp.\ $n_1$, $\ell$) is the number of
  interior (resp.\ boundary, mixed) double points.
\end{cor}

In particular,
if the Maslov index vanishes and $n\geq3$
generically somewhere injective holomorphic disc-maps
are injective.


\subsection{Singularities\label{seconsing}}

A non-constant holomorphic map $u$
has a {\bf singularity of order} $k$ at $z$
if $j_z^ku=0$ and $D^{k+1}u(z)\neq 0$.
By Carleman's similarity principle
(cf.\ \cite[Chapter A.6]{hoze94}
and \cite[Theorem A.2]{abb04})
$k$ is the greatest natural number
such that all partial derivatives
of $u$ at $z$ vanish up to order $k$.

For a generic choice of
compatible almost complex structure
the moduli space of unparametrized somewhere injective
(in the sense of Remark \ref{wellknowncase})
holomorphic curves
with vanishing derivatives at $\mathbf{m}$ points
up to order $\mathbf{k}$
has dimension
\[\mu(A)+n\big(\chi(\Sigma)-\|\mathbf{k}\|\big)+\|\mathbf{m}\|-d.\]
As in Corollary \ref{coradineq} we obtain,
cf.\ \cite[Corollary 3.17]{wen10} and \cite{ohzh09,oh11}:

\begin{cor}
  \label{corsing}
  For generic $J\in\JJ$
  and all somewhere injective $J$-holomorphic curves $u$
  that are contained in the perturbation domain
  we have:
  \[n\|\mathbf{k}\|-\|\mathbf{m}\|\leq\mu([u])+n\chi(\Sigma)-d.\]
  Here $\mathbf{k}$ is the order of singularities of $u$ at
  $\mathbf{m}$ points.
\end{cor}

For example, generically,
all somewhere injective holomorphic discs
with Maslov index less or equal than $1$
are immersed.

\begin{rem}
  The map $(u,\mathbf{z},\mathbf{x})\mapsto u$
  which is defined on the moduli space
  of curves as in Corollary \ref{corsing}
  is an immersion into the moduli space of unconstrained curves.
  This is because the kernel of the linearization
  $(\xi,\mathbf{v},\mathbf{w})\mapsto\xi$
  is given by all $(\mathbf{v},\mathbf{w})\in T\SS$
  satisfying
  $j_{(\mathbf{z},\mathbf{x})}^{\mathbf{k}}Tu(\mathbf{v},\mathbf{w})=0$.
\end{rem}


\subsection{Example\label{secexample}}

Consider a rational split Lagrangian submanifold
$L=S^1\times L'$ in $\R^2\times\R^{2n-2}$
with area spectrum $\pi\Z$.
For generic compatible almost complex structures $J$
which equal $\rmi$ outside a large ball $B$
we consider holomorphic discs
representing the class $[D\times\{*\}]$,
which are simple,
see \cite{lazz00,lazz11}.
By Corollary \ref{corsing}
there is at most one singularity
on each disc,
which is simple and
lies on the boundary.
Indeed, the right hand side of the inequality
in Corollary \ref{corsing} equals $n-1$.
Because $\|\mathbf{m}\|\leq\|\mathbf{k}\|$
this implies that $\|\mathbf{m}\|\leq 1$.
Hence, $\|\mathbf{k}\|\leq 1$ too.
Therefore, the zero-dimensional moduli space
of discs with a simple boundary singularity
$\MM_{\partial\text{-}\mathrm{sing}}$
embeds via $[u,x_1]\mapsto [u]$
into the moduli space of all discs $\MM$.
By Gromov compactness $\MM$ is compact,
see \cite{fra08,grom85}.
Therefore, there are finitely many discs
with singularities.

The number is even.
To prove this
consider a generic path $J(t)$
of almost complex structures
that connects $\rmi$ with $J$
and that are equal to $\rmi$ outside $B$.
Let 
$\WW_{\partial\text{-}\mathrm{sing}}$
be the moduli space
of unparametrized holomorphic discs
with a boundary singularity that 
are holomorphic with respect to
one of the $J(t)$'s.
With \cite[p.\ 43]{mcsa04},
the regularity arguments of the present work,
and Gromov compactness \cite{fra08,grom85}
$\WW_{\partial\text{-}\mathrm{sing}}$
is a $1$-dimensional cobordism.
Because all $\rmi$-holomorphic discs in the class
$[D\times\{*\}]$
are embedded we obtain with the argument from
\cite[Remark 3.2.8]{mcsa04}
that the boundary of 
$\WW_{\partial\text{-}\mathrm{sing}}$
equals
$\MM_{\partial\text{-}\mathrm{sing}}$.
The number of boundary components is even.


\subsection{Embeddings\label{subsecemb}}

We consider somewhere injective
holomorphic curves
contained in the perturbation domain
that represent a homology class $A$
(and assume that the biholomorphic
reparametrisation group acts freely
on the moduli spaces under consideration).
Generically, the corresponding
moduli space $\MM$
is a manifold of dimension
$\mu(A)+n\chi(\Sigma)-d$.
We can assume that the moduli spaces
of curves
which additionally have a singularity
in the interior or
on the boundary
are manifolds of dimension
\begin{itemize}
\item $\dim\MM_{\mathrm{sing}}=\dim\MM+2(1-n)$
\item $\dim\MM_{\partial\text{-}\mathrm{sing}}=\dim\MM+1-n$
\end{itemize}
these with interior, mixed, and boundary
double points are manifolds of dimension
\begin{itemize}
\item $\dim\MM_{\mathrm{inter}}=\dim\MM+2(2-n)$
\item $\dim\MM_{\mathrm{mix}}=\dim\MM+3-2n$
\item $\dim\MM_{\partial\text{-}\mathrm{inter}}=\dim\MM+2-n$
\end{itemize}
and that the moduli space of curves
that intersect $L$ in an interior point
is a manifold of dimension
\begin{itemize}
\item $\dim\MM_L=\dim\MM+2-n$.
\end{itemize}
Each of the above moduli spaces admits a smooth map
into $\MM$ induced by forgetting the marked points.
The complement of the images is the subset of
{\bf perfectly embedded} curves.
By Sard's theorem
the union of the critical values has measure zero.
Because the closure of the images of the moduli spaces
subject to a pure intersection relation
is contained in the union with
the images of $\MM_{\mathrm{sing}}$
and $\MM_{\partial\text{-}\mathrm{sing}}$
we obtain:

\begin{cor}
  Generically,
  the set of all perfectly embedded
  holomorphic curves
  in $\MM$ is open and dense
  provided $\dim M\geq6$.
\end{cor}

An immersed holomorphic curve is called {\bf clean}
if there is no mixed double point
and all double points
and all interior intersections with $L$
are simple and transverse.
With similar arguments we get:

\begin{cor}
  If $\dim M=4$
  the set of all clean immersions in $\MM$
  generically is open and dense.
\end{cor}

\begin{appendix}
  \section{Local perturbations\label{locpertapp}}

  As stated in Remark \ref{wellknowncase}
  the requirements on the perturbation domain
  (in order to provide the moduli space of holomorphic maps
  subject to pointwise constraints
  with a smooth structure)
  can be weakened.
  Consider the cases:
  \begin{itemize}
  \item 
    The holomorphic maps are simple,
    defined on a closed Riemann surface,
    and the image of each connected component
    intersects the perturbation domain
    non-trivially.
    The intersection relation is allowed
    to be of higher order.
  \item 
    The domain $\Sigma$ is allowed to
    have non-empty boundary $\Gamma$,
    the intersection relation is required
    to be of order $\|\mathbf{t}\|=0$,
    and on each connected component
    there exists an injective point for
    the holomorphic map $u$ that is mapped by $u$
    into the perturbation domain,
    see Section \ref{acstandfloer}.
    It can be assumed that the distinguished injective points
    are contained in 
    $\Sigma\setminus\{\mathbf{z},\mathbf{x}\}$
    because the set of injective points is open.
  \end{itemize}

  \begin{thm}
    \label{transforlocpertapp}
    There exists a subset $\JJ_{\mathrm{gen}}$
    of second Baire category in $\JJ$
    such that for all $J\in\JJ_{\mathrm{gen}}$
    the conclusion
    of the Theorem \ref{mainthmfirstfrom}
    holds for $J$-holomorphic curves
    as described.
  \end{thm}

  \begin{proof}
    We claim that the corresponding universal
    moduli space $\UU$ is a separable Banach manifold.
    As in the proof of Proposition \ref{unimodspacerel}
    we consider a tangent vector
    \[
    j_{(\mathbf{v},\mathbf{w})}^{\mathbf{t}}(\tilde{\xi}_1,0)
    \]
    represented by a local holomorphic section
    $\xi_1$ of $(u^*TM,u^*TL)$
    near $\{\mathbf{z},\mathbf{x}\}$.
    It can be assumed
    that $\xi_1$ is extended to a global smooth section.
    We will construct a further smooth section $\xi_2$
    and an infinitesimal almost complex structure $Y\in T_J\II$
    such that $(\xi,Y)$ with $\xi=\xi_1+\xi_2$
    is tangent to $\UU_{\emptyset}$
    and
    \[
    j_{(\mathbf{v},\mathbf{w})}^{\mathbf{t}}(\tilde{\xi},Y)=
    j_{(\mathbf{v},\mathbf{w})}^{\mathbf{t}}(\tilde{\xi}_1,0).
    \]
    This will show that
    the $\mathbf{t}$-jet extension map
    is transverse to $\RR$
    because the universal $\mathbf{t}$-jet space
    is finite-dimensional.
    That $\UU$ is a separable Banach manifold
    follows then with \cite[Section II.2]{lang99}.
    
    We construct the pair $(\xi_2,Y)$.
    Denote by $\nabla$ the Levi-Civita connection
    of the metric $\langle\,.\,,.\,\rangle=\omega(\,.\,,J\,.\,)$.
    Consider the linearized Cauchy-Riemann operator
    \[
    D_u\xi=
    \nabla\xi+J(u)\,\nabla\xi\circ\rmi
    +\nabla_{\xi}J(u)\,Tu\circ\rmi,
    \]
    which maps smooth sections $\xi$ of $(u^*TM,u^*TL)$
    into smooth $J$-antilinear $1$-forms on $\Sigma$
    with values in $u^*TM$,
    cf.\ \cite{wen10b}.
    Set
    \[
    \eta_1:=-D_u\xi_1
    \]
    and observe that $\eta_1$ vanishes in a
    neighbourhood of $\{\mathbf{z},\mathbf{x}\}$.
    We claim that for $k=\max\{\mathbf{t}\}$
    there exists $(\xi_2,Y)$ such that
    \begin{itemize}
    \item 
      $\xi_2$ vanishes at least to the $k^{\mathrm{th}}$
      order on the marked points $\{\mathbf{z},\mathbf{x}\}$,
    \item 
      $Y\in T_J\II$ has support in the complement of
      $u(\mathbf{z},\mathbf{x})$, and
    \item 
      $D_u\xi_2=\eta_1-Y(u)\,Tu\circ\rmi$.
    \end{itemize}
    Any solution $(\xi_2,Y)$ of the problem
    has the required properties.
    Therefore, in order to provide $\UU$
    with the desired smooth structure    
    it suffices to solve the problem.
    
    If $k=0$ this is the content of the proof of
    \cite[Lemma 3.4.3]{mcsa04}.
    Otherwise, if $k\geq1$ and $\Gamma=\emptyset$,
    we argue as follows:  
    Denote by $A$ the Banach space of sections
    of $(u^*TM,u^*TL)$ of Sobolev class $W^{k+1,p}$, $p>2$,
    that vanish on the marked points
    $\{\mathbf{z}\}$ up to
    order $k$.
    Denote by $B$ the Banach space of
    $J$-antilinear $1$-forms on $\Sigma$
    with values in $u^*TM$ of Sobolev class $W^{k,p}$
    that vanish on the marked points
    $\{\mathbf{z}\}$ up to order $k-1$.
    Let $I\subset T_J\II$ be the space of
    infinitesimal almost complex structures
    that have support in a prescribed neighbourhood $U$
    of the image of the distinguished injective points
    of $u$ such that $U$ is disjoint from
    $u(\mathbf{z})$.
    Let $F\co A\times I\ra B$ be the operator
    given by $F(\xi,Y)=D_u\xi+Y(u)\,Tu\circ\rmi$.
    By the arguments of \cite[Lemma 6.6]{ciemoh07}
    and Carleman's similarity principle
    the operator $F$ is onto.
    Hence,
    $\UU$ is a separable Banach manifold.
    
    To finish the proof one follows the arguments
    of Section \ref{subsec:genpert} and
    Theorem \ref{transforallsomewinjcurves}.
    In order to apply the Taubes trick
    one replaces the condition on the injective points
    by the requirement:
    On any connected component of
    $\Sigma\setminus\Sigma_{1/\nu}$,
    where we denote by $\Sigma_{1/\nu}$
    the set of points on
    $\Sigma$ that have distance smaller than $1/\nu$
    to $\{\mathbf{z},\mathbf{x}\}$,
    there exists a point $z_0$
    such that
    \[
    \inf_{z\neq z_0}
    \frac
    {\dist\big(u(z),u(z_0)\big)}
    {\dist(z,z_0)}
    \geq\frac{1}{\nu}
    \]
    and the distance of $u(z_0)$
    to the boundary of the perturbation domain
    is greater or equal to $1/\nu$.
  \end{proof}

\end{appendix}


\begin{ack}
  The research presented in this article
  generalizes parts of my doctoral thesis \cite{zehm08}.
  I am grateful to Matthias Schwarz for suggesting the topic
  and the unknown referee for 
  drawing my attention to the Taubes trick.
  I would like to thank Alberto Abbondandolo and Stefan Suhr
  for intensive discussions about the analysis
  behind the arguments of the present work.
  I would like to thank Georgios Dimitroglou Rizell,
  Jonathan David Evans, and Samuel Lisi 
  for pointing out the potential applications
  to (open) Gromov-Witten and (relative) symplectic field theory.
  This motivated parts of this work.
  Further,
  I would like to thank Hansj\"org Geiges
  for explaining to me how to use jets
  and for illuminating discussions about
  the correct use of the contraposition.
\end{ack}



\begin{thebibliography}{10}  

\bibitem{abb04}
  {\sc C. Abbas},
  Pseudoholomorphic strips in symplectisations. {III}.
  {E}mbedding properties and compactness,
  \emph{J. Symplectic Geom.} {\bf 2} (2004),
  219--260.

\bibitem{aud94}
  {\sc M. Audin},
  Symplectic and almost complex manifolds,
  in: \emph{Holomorphic curves in symplectic geometry},
  165--189,
  Progr. Math., {\bf 117}, Birkh\"auser, Basel, (1994),
  1--328.

\bibitem{bar99}
  {\sc J.-F. Barraud},
  Nodal symplectic spheres in {$\bold C{\rm P}^2$} with positive self-intersection,
  \emph{Internat. Math. Res. Notices} {\bf 1999} (1999),
  495--508.

\bibitem{bar00}
  {\sc J.-F. Barraud},
  Courbes pseudo-holomorphes \'equisinguli\`eres en dimension 4,
  \emph{Bull. Soc. Math. France} {\bf 128} (2000),
  179--206.

\bibitem{bircor07}
  {\sc P. Biran, O. Cornea},
  Quantum structures for Lagrangian submanifolds,
  Preprint, (2007), SG/0708.4221v1.

\bibitem{ciemoh07}
  {\sc K. Cieliebak, K. Mohnke},
  Symplectic hypersurfaces and transversality in {G}romov-{W}itten theory,
  \emph{J. Symplectic Geom.} {\bf 5} (2007),
  281--356.
  
\bibitem{fra08}
  {\sc U. Frauenfelder},
  Gromov convergence of pseudoholomorphic disks,
  \emph{J. Fixed Point Theory Appl.} {\bf 3} (2008),
  215--271.

\bibitem{geig03}
  {\sc H. Geiges},
  \emph{{$h$}-principles and flexibility in geometry},
  Mem. Amer. Math. Soc. {\bf 164} (2003),
  1--58.
  
\bibitem{geizeh10}
  {\sc H. Geiges, K. Zehmisch},
  Eliashberg's proof of Cerf's theorem,
  \emph{J. Topol. Anal.} {\bf 2} (2010),
  543--579.

\bibitem{geizeh13}
  {\sc H. Geiges, K. Zehmisch},
  How to recognise a 4-ball when you see one,
  to appear in \emph{M\"unster J. Math.} (2013).
  
\bibitem{grom85}
  {\sc M. Gromov},
  Pseudoholomorphic curves in symplectic manifolds,
  \emph{Invent. Math.} {\bf 82} (1985),
  307--347.

\bibitem{hoze94}
  {\sc H. Hofer, E. Zehnder},
  \emph{Symplectic invariants and {H}amiltonian dynamics},
  Birkh\"auser Verlag, Basel, (1994),
  1--341.

\bibitem{lang99}
  {\sc S. Lang},
  \emph{Fundamentals of differential geometry},
  Graduate Texts in Mathematics {\bf 191},
  Springer-Verlag, New York, (1999),
  1--535.

\bibitem{lazz00}
  {\sc L. Lazzarini},
  Existence of a somewhere injective pseudo-holomorphic disc,
  \emph{Geom. Funct. Anal.} {\bf 10} (2000),
  829--862.

\bibitem{lazz11}
  {\sc L. Lazzarini},
  Relative frames on {$J$}-holomorphic curves,
  \emph{J. Fixed Point Theory Appl.} {\bf 9} (2011),
  213--256.

\bibitem{mcsa98}
  {\sc D. McDuff, D. Salamon},
  \emph{Introduction to symplectic topology},
  Oxford Mathematical Monographs, Second,
  The Clarendon Press Oxford University Press, New York, (1998),
  1--486.

\bibitem{mcsa04}
  {\sc D. McDuff, D. Salamon},
  \emph{$J$-holomorphic Curves and Symplectic Topology},
  Amer. Math. Soc. Colloq. Publ. {\bf 52},
  American Mathematical Society, Providence, RI (2004),
  1--669.

\bibitem{oh11}
  {\sc Y.-G. Oh},
  Higher jet evaluation transversality of {$J$}-holomorphic curves,
  \emph{J. Korean Math. Soc.} {\bf 48} (2011),
  341--365.

\bibitem{ohzh09}
  {\sc Y.-G. Oh, K. Zhu},
  Embedding property of {$J$}-holomorphic curves in {C}alabi-{Y}au manifolds for generic {$J$},
  \emph{Asian J. Math.} {\bf 13} (2009),
  323--340.

\bibitem{ohzh11}
  {\sc Y.-G. Oh, K. Zhu},
  Floer trajectories with immersed nodes and scale-dependent gluing,
  \emph{J. Symplectic Geom.} {\bf 9} (2011),
  483--636.

\bibitem{sch95}
  {\sc M. Schwarz},
  \emph{{C}homology operations from $\,S^1$-cobordisms in {F}loer homology},
  PhD thesis,
  ETH-Z{\"u}rich (1995),
  1--222.

\bibitem{sik94}
  {\sc J.-C. Sikorav},
  Some properties of holomorphic curves in almost complex manifolds,
  in: \emph{Holomorphic curves in symplectic geometry},
  165--189,
  Progr. Math., {\bf 117}, Birkh\"auser, Basel, (1994),
  1--328.

\bibitem{wen10} 
  {\sc C. Wendl},
  Automatic transversality and orbifolds of punctured holomorphic curves in dimension four,
  \emph{Comment. Math. Helv.} {\bf 85} (2010),
  347--407.

\bibitem{wen10b} 
  {\sc C. Wendl},
  Lectures on Holomorphic Curves in Symplectic and Contact Geometry
  (Version 3.1), Preprint, (2010),
  arXiv:1011.1690.

\bibitem{zehm08}
  {\sc K. Zehmisch},
  \emph{Singularities and self-intersections of holomorphic discs},
  Doktorarbeit,
  Universit\"at {L}eipzig (2008),
  1--276.

\bibitem{zehm13}
  {\sc K. Zehmisch},
  The annulus property of simple holomorphic discs,
  \emph{J. Symplectic Geom.} {\bf 11} (2013),
  135--161. 

\end{thebibliography}
\end{document}